\newtheorem{theorem}{Theorem}[section]
\newtheorem{proposition}[theorem]{Proposition}
\newtheorem{lemma}[theorem]{Lemma}
\theoremstyle{definition}
\newtheorem{example}[theorem]{Example}  
\newtheorem{definition}[theorem]{Definition}   
\theoremstyle{remark}
\renewcommand{\S}{\mathcal{S}}
\newcommand{\M}{\mathcal{M}}
\renewcommand{\L}{\mathcal{L}}
\newcommand{\W}{\mathcal{W}}
\newcommand{\So}{\S^o}
\newcommand{\Se}{\S^e}
\newcommand{\Mo}{\M^o}
\newcommand{\Me}{\M^e}
\newcommand{\Lo}{\L^o}
\newcommand{\Le}{\L^e}
\newcommand{\Wo}{\W^o}
\newcommand{\We}{\W^e}
\renewcommand{\u}{\mathtt{u}}
\renewcommand{\v}{\mathtt{v}}
\newcommand{\w}{\mathtt{w}}
\renewcommand{\a}{\mathtt{a}}
\renewcommand{\b}{\mathtt{b}}
\renewcommand{\c}{\mathtt{c}}
\renewcommand{\d}{\mathtt{d}}
\newcommand{\e}{\mathtt{e}}
\DeclareMathOperator{\wt}{wt}
\DeclareMathOperator{\Des}{Des}
\DeclareMathOperator{\Asc}{Asc}
\newcommand{\lalt}{<_{\mathrm{alt}}}
\newcommand{\x}{\mathbf{x}}
\newcommand{\Psinv}{\Omega}
\newcommand{\psinv}{\omega}
\newcommand{\cS}{\texttt{(S)}}
\newcommand{\cP}{\texttt{(P)}}
\newcommand{\cF}{\texttt{(F)}}
\newcommand{\cSi}{\texttt{(S')}}
\newcommand{\cPi}{\texttt{(P')}}
\newcommand{\cFi}{\texttt{(F')}}
\newcommand\dashline{\rotatebox[origin=c]{90}{\,-\,-}}
\title{A bijection for descent sets of permutations with only even and only odd cycles}
\author{Sergi Elizalde}
\keywords{Permutation, descent set, bijection, Lyndon factorization, necklace, odd cycle, even cycle.}
\begin{document}

\maketitle

\begin{abstract}
It is known that, when $n$ is even, the number of permutations of $\{1,2,\dots,n\}$ all of whose cycles have odd length equals the number of those all of whose cycles have even length. Adin, Heged\H{u}s and Roichman recently found a surprising refinement of this identity. 
They showed that, for any fixed set $J$, 
the equality still holds when restricting to permutations with descent set $J$ on one side, and permutations with ascent set $J$ on the other.
Their proof uses generating functions for higher Lie characters, and it also yields a version for odd~$n$.

Here we give a bijective proof of their result. We first use known bijections, due to Gessel, Reutenauer and others, to restate the identity in terms of multisets of necklaces, which we interpret as words, and then describe a new weight-preserving bijection between words all of whose Lyndon factors have odd length and are distinct, and words all of whose Lyndon factors have even length.
We also show that the corresponding equality about Lyndon factorizations has a short proof using generating functions.
\end{abstract}

\section{Introduction}\label{sec:intro}

For a positive integer $n$, let $\S_n$ denote the symmetric group on $[n]=\{1,2,\dots,n\}$. Let $\So_n$ be the set of permutations in $\S_n$ all of whose cycles have odd length. 
Let $\Se_n$ be the set of permutations all of whose cycles have even length, except possibly for one cycle of length one (i.e., a fixed point). Note that permutations in $\Se_n$ consist of only cycles of even length if $n$ is even, and they have one fixed point if $n$ is odd.

It is known that
$$|\So_n|=|\Se_n|=\begin{cases} (n-1)!!^2= (n-1)^2(n-3)^2\dots  1^2 & \text{if $n$ is even},\\
n\,(n-2)!!^2=n (n-2)^2(n-4)^2\dots 2^2& \text{if $n$ is odd}.
\end{cases}$$
This is proved, for example, in B\'ona's book \cite[Thms.\ 6.24 \& 6.25]{Bona}, and it can also easily be shown using exponential generating functions.
For even $n$, the following bijective proof of the equality $|\So_n|=|\Se_n|$ appears in \cite[Lem.~6.20]{Bona}. Given a permutation in $\So_n$, first write it in standard cycle form, where each cycle starts with its largest element, and cycles are ordered by increasing first element. Let the cycles be $C_1,C_2,\dots,C_{2k}$ from left to right. Then, for each $i\in[k]$, take the last element from $C_{2i-1}$ and place it at the end of $C_{2i}$. As shown in~\cite{Bona}, this construction is a bijection from $\So_n$ to $\Se_n$ when $n$ is even. It is not hard to extend this bijection to the case of odd $n$.

In a recent preprint~\cite{AHR}, Adin, Heged\H{u}s and Roichman proved that that the identity $|\So_n|=|\Se_n|$ has a surprising refinement. 
For $\pi\in\S_n$, denote its descent set by $\Des(\pi)=\{i\in[n-1]:\pi_i>\pi_{i+1}\}$, and its ascent set by $\Asc(\pi)=\{i\in[n-1]:\pi_i<\pi_{i+1}\}$. By definition, $\Asc(\pi)=[n-1]\setminus\Des(\pi)$. 
The following is the main result from~\cite{AHR}, stated here in an equivalent form where the roles of ascents and descents are switched; the reason for this will become clear later.

\begin{theorem}[\cite{AHR}]\label{thm:AHR}
For any positive integer $n$ and any subset $J\subseteq [n-1]$,
$$|\{\pi\in\So_n:\Asc(\pi)=J\}| = |\{\pi\in\Se_n: \Des(\pi)=J\}|.$$
\end{theorem}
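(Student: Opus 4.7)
My approach is to attack the theorem in two stages: first reduce it to a purely bijective statement about words, using the Gessel--Reutenauer correspondence and its relatives, and then prove that bijective statement by constructing an explicit map on words indexed by their Lyndon factorizations.

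In the first stage, I would use the Gessel--Reutenauer machinery, which assigns to each permutation of $[n]$ a word of length $n$ whose Lyndon factorization records the cycle type (the multiset of lengths of the Lyndon factors coincides with the cycle type of the permutation), and whose descent set matches the descent set of the permutation. Together with the elementary bijection that flips $\Des$ and $\Asc$ and a standard Euler-type identity exchanging ``multisets'' with ``distinct selections,'' this dictionary should translate $\{\pi\in\So_n:\Asc(\pi)=J\}$ into the set of words of length $n$ whose Lyndon factors are all of odd length and pairwise distinct and whose word descent set encodes $J$, and $\{\pi\in\Se_n:\Des(\pi)=J\}$ into the set of words of length $n$ whose Lyndon factors are all of even length (with the natural correction for the possible single factor of length $1$ when $n$ is odd) and whose descent set is $J$. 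The distinctness condition on the odd side is precisely what the $\Asc$/$\Des$ swap forces through the Euler identity.

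In the second stage, the main content of the paper, I would build a weight-preserving bijection $\Psi$ between words of length $n$ whose Lyndon factors are all odd-length and pairwise distinct, and words of length $n$ whose Lyndon factors are all of even length, satisfying $\Des(\Psi(w))=\Des(w)$. A naive attempt would be to pair the distinct odd-length Lyndon factors $L_1>L_2>\cdots>L_{2k}$ in order and replace each adjacent pair $L_{2i-1},L_{2i}$ by the single even-length word $L_{2i}L_{2i-1}$, which is indeed Lyndon because $L_{2i}<L_{2i-1}$. This pairing, however, does not in general preserve descents at the factor boundaries, and it is blind to the parity of the number of factors, which matters when $n$ is odd. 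The main obstacle will therefore be to refine this merging recipe, most likely through a letter-by-letter alternating comparison across consecutive factors, so that descent positions are neither created nor destroyed, and so that the special factor of length $1$ in $\Se_n$ for odd $n$ is handled naturally. Once $\Psi$ and its inverse (which should split each even-length Lyndon factor into a pair of distinct odd-length ones) are written down and checked to be mutually inverse, the theorem follows by combining them with the reduction of the first stage.
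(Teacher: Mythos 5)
Your high-level architecture (Gessel--Reutenauer-type correspondences, then a new bijection on words organized by their Lyndon factorizations) is the same as the paper's, but two of your reductions are wrong in ways that would sink the proof. First, the set $J$ is \emph{not} encoded in the descent set of the word on the necklace side; it is encoded in the letter content. The correspondence sends $\pi$ with $\Des(\pi)\subseteq S$ to a multiset of necklaces of weight $\x^{\alpha(S)}$ (after an inclusion--exclusion step reducing ``$=J$'' to ``$\subseteq S$'', which you omit), so the bijection on words only needs to preserve the \emph{weight}, i.e.\ the multiset of letters. Your added requirement $\Des(\Psi(w))=\Des(w)$ is not only unnecessary but impossible: for $n=2$ and content $\{\a,\b\}$, the unique word with odd, distinct Lyndon factors is $\b|\a$ (descent at position $1$) and the unique word with even Lyndon factors is $\a\b$ (no descent). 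Second, the ascent side cannot be obtained from the descent side by ``flipping $\Des$ and $\Asc$ plus an Euler-type identity'': complementing the descent set destroys the cycle/necklace correspondence, and the Euler product argument is a generating-function identity, not a bijection. The paper instead uses a genuinely different bijection (Gessel--Restivo--Reutenauer, Steinhardt), whose inverse orders periodic labels by the \emph{alternating} lexicographic order; the fact that its image consists of \emph{distinct} odd primitive necklaces is a theorem about that map, not a formal consequence of exchanging multisets for distinct selections.

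Even granting the corrected reduction, the heart of the matter --- the weight-preserving bijection $\Psi:\Wo_n\to\We_n$ --- is left essentially unconstructed. Your pairing $L_{2i-1},L_{2i}\mapsto L_{2i}L_{2i-1}$ is not surjective: for example $\a\b\a\c$ is an even Lyndon word whose standard factorization is $\a\b\cdot\a\c$, and it is not a concatenation of two odd Lyndon words, so it is missed entirely; multiplicities on the even side cause further trouble. The paper's map is an iterative peeling procedure driven by the standard factorization of the last Lyndon factor of the ``odd'' word, with three cases (suffix, prefix, flip) decided by a splittability condition, and its inverse requires the notion of iterated standard factorization relative to a reference word; proving the two are mutually inverse occupies most of the paper. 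A ``letter-by-letter alternating comparison'' is not a substitute for this construction, so the proposal as written has a genuine gap at its central step.
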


Adin, Heged\H{u}s and Roichman show that Theorem~\ref{thm:AHR} is equivalent to an identity for higher Lie characters on $\S_n$, and then use heavy machinery to find generating functions for these characters. A natural question is whether Theorem~\ref{thm:AHR} has a bijective proof. Unfortunately, B\'ona's bijection between $\So_n$ and $\Se_n$ does not behave well with respect to descent sets, and neither does any simple variation of it.

The goal of this paper is to give a combinatorial proof of Theorem~\ref{thm:AHR}. More specifically, we will prove the following.

\begin{theorem}\label{thm:main}
For any positive integer $n$ and any subset $S\subseteq [n-1]$, there exists an explicit bijection
\begin{equation}\label{eq:main}
f_S: \{\pi\in\So_n:\Asc(\pi)\subseteq S\}\to\{\pi\in\Se_n: \Des(\pi)\subseteq S\}.
\end{equation}
\end{theorem}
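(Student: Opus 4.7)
The plan is to prove Theorem~\ref{thm:main} by reducing it, through classical correspondences, to a purely combinatorial identity at the level of Lyndon factorizations, and then constructing an explicit weight-preserving bijection.

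First I would apply the Gessel--Reutenauer bijection, together with its descent-preserving refinements, to translate both sides of~\eqref{eq:main} into a word-counting problem. Under this machinery, a permutation decomposes into a multiset of primitive necklaces governed by its cycle structure, which can be represented canonically as a word of length $n$ by concatenating the Lyndon representatives of the necklaces in the non-increasing order prescribed by the Chen--Fox--Lyndon theorem. The condition that all cycles of $\pi$ have odd length then translates into the condition that all Lyndon factors of the associated word have odd length, and similarly for even lengths. The ascent/descent constraint on $\pi$ should translate, via standard quasisymmetric manipulations, into a constraint on the letters of the word encoded by $S$, or equivalently on its own descent set.

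There is a subtlety: on the $\So_n$ side, odd-length Lyndon factors may repeat, but one can use the identity $(1-x^k)^{-1}=(1+x^k)(1-x^{2k})^{-1}$ applied Lyndon-word by Lyndon-word to absorb repeated odd-length factors into new even-length ones. After this preparation, the theorem becomes equivalent to the existence of a weight-preserving bijection between words whose Lyndon factorization consists of \emph{pairwise distinct} odd-length factors and words whose Lyndon factorization consists of even-length factors (allowed to repeat), which is the form anticipated in the abstract.

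To construct this bijection, I would adopt a pairing-splitting strategy: given a word with distinct odd-length Lyndon factors $\u_1>\u_2>\cdots>\u_k$ (lexicographically), scan the sequence in a canonical order and merge consecutive pairs into even-length Lyndon factors, exploiting the fact that a suitable concatenation of two comparable Lyndon words is again Lyndon. Conversely, given a word with even-length Lyndon factors, split each one at a canonical position that yields two Lyndon words of odd length when possible, falling back on a recursive rule otherwise. The hard part will be pinning down the merging/splitting rule so that the procedure is (i) reversible, (ii) always outputs a valid Lyndon factorization with the prescribed parity condition, and (iii) preserves the descent set of the underlying word. Of these, (iii) is expected to be the most delicate: the merging and splitting must occur only at positions that neither create nor destroy descents outside $S$, and making this compatible with the Lyndon structure at every step is the main obstacle.
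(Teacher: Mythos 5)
Your overall architecture coincides with the paper's: reduce both sides of~\eqref{eq:main} to multisets of primitive necklaces, encode these as words via the Chen--Fox--Lyndon factorization, and then build a weight-preserving bijection between words with distinct odd Lyndon factors and words with even Lyndon factors. However, two of the three essential ingredients are missing or wrong. First, the ascent condition on the $\So_n$ side cannot be absorbed by ``standard quasisymmetric manipulations'' of the Gessel--Reutenauer map: that map sends $\{\pi:\Des(\pi)\subseteq S\}$ to multisets of weight $\x^{\alpha(S)}$, and $|\{\pi\in\So_n:\Des(\pi)\subseteq S\}|$ differs in general from $|\{\pi\in\So_n:\Asc(\pi)\subseteq S\}|$ (already for $n=2$, $S=\emptyset$). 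The paper handles the odd side with a genuinely different bijection $\Xi_S$ (Gessel--Restivo--Reutenauer/Steinhardt), whose inverse orders periodic labels by the \emph{alternating} lexicographic order $\lalt$; this is also precisely what forces the odd necklaces to be distinct, so your generating-function identity $(1-x^k)^{-1}=(1+x^k)(1-x^{2k})^{-1}$ is not needed --- and as a bijective device it would produce non-primitive doubled necklaces $(\ell\ell)$, which do not correspond to even cycles under $\Phi_S$, so that reduction would not close. Second, your condition (iii) is a red herring: after the correct reduction, the descent/ascent constraint on $\pi$ is encoded entirely in the \emph{letter content} of the word (the weight $\x^{\alpha(S)}$), so the bijection $\Wo_n\to\We_n$ need only preserve weight, not the word's own descent set. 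You have identified a spurious difficulty while leaving the real one unresolved.

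The real difficulty is the merging/splitting rule you defer. The naive pairing of consecutive odd Lyndon factors (essentially B\'ona's bijection lifted to words, which the paper explicitly notes does not work) fails for concrete reasons: an even Lyndon word can factor as a product of two odd Lyndon words in more than one way (e.g.\ $\a\a\b\b=\a\cdot\a\b\b=\a\a\b\cdot\b$), so the splitting direction is not canonical; and a word such as $\a\b\,|\,\a\b$, with a repeated even factor, cannot arise from pairwise \emph{distinct} odd factors by any pairing of whole factors. The paper's map $\Psi$ is accordingly much more intricate: it iteratively peels pieces off the rightmost odd factor using its standard factorization, choosing among three move types according to a ``splittability'' test against the neighboring factor, and the even Lyndon factors of the output are typically assembled from several such pieces; inverting it requires an iterated standard factorization relative to the current odd factors. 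Without a construction at this level of precision, together with a proof of reversibility, the proposal is a plausible plan rather than a proof.
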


The proof combines two known bijections between permutations and multisets of necklaces, together with a new bijection for Lyndon factorizations of words. We will show that the identity for Lyndon factorizations also has an alternative short proof using generating functions. 

Our bijection proves a slight modification of Theorem~\ref{thm:AHR}, namely, that 
\begin{equation}\label{eq:OEsubset}
|\{\pi\in\So_n:\Asc(\pi)\subseteq S\}| = |\{\pi\in\Se_n: \Des(\pi)\subseteq S\}|
\end{equation}
for any $S\subseteq [n-1]$. Theorem~\ref{thm:AHR} follows from this equality by the principle of inclusion-exclusion, since
\begin{align*}|\{\pi\in\So_n:\Asc(\pi)=J\}|&=\sum_{S\subseteq J}(-1)^{|J\setminus S|} |\{\pi\in\So_n:\Asc(\pi)\subseteq S\}\\
&=\sum_{S\subseteq J}(-1)^{|J\setminus S|} |\{\pi\in\Se_n: \Des(\pi)\subseteq S\}|=|\{\pi\in\Se_n: \Des(\pi)=J\}|.\end{align*}

The paper is structured as follows. In Section~\ref{sec:necklaces} we describe two bijections between permutations and multisets of necklaces. The first one is a classical bijection due to Gessel and Reutenauer~\cite{GR}, which lies at the heart of algebraic combinatorics. We apply it to permutations with even cycles whose descent set is contained in $S$. 
The second bijection is a variation of the first, and is a special case of a bijection of Gessel, Restivo and Reutenauer~\cite{GRR}, and of a more general bijection of Steinhardt~\cite{Steinhardt}.
We apply it to permutations with odd cycles whose ascent set is contained in $S$.
In Section~\ref{sec:GF} we interpret the resulting multisets of necklaces as words whose Lyndon factorization has only even factors on one hand, and 
words whose Lyndon factorization has only odd and distinct factors on the other hand. 
We first use generating functions to show that both sets of words have the same cardinality, which gives relatively short proof of Theorem~\ref{thm:AHR}.
In Section~\ref{sec:bijection} we construct an explicit bijection between the two sets of words, completing the proof of Theorem~\ref{thm:main}. The description of the map is relatively simple, but proving that it is a bijection takes some work.

\section{From permutations to multisets of necklaces}\label{sec:necklaces}

In this section we show that both sides of equation~\eqref{eq:main} are in bijection with multisets of primitive necklaces satisfying certain conditions. Let us start with some definitions and notation.

Let $S=\{s_1,s_2,\dots,s_{k-1}\}\subseteq[n-1]$, where $s_1<s_2<\dots<s_{k-1}$. Denote its associated composition by $\alpha=\alpha(S)=(s_1,s_2-s_1,\dots,s_{k-1}-s_{k-2},n-s_{k-1})$, and define the monomial $\x^{\alpha}=\prod_{i=1}^k x_i^{\alpha_i}$.

Fix an alphabet $A=\{a_1,a_2,\dots,a_k\}$, with total order $a_1<a_2<\dots<a_k$. Sometimes we will denote the letters by $\a<\b<\c<\cdots$ instead. The assumption that the alphabet is finite will simplify our notation, but this assumption is not important. Denote by $\W=A^\ast$ the set of finite words over $A$, and by $\W_n$ the set of those of length $n$. Define two words $u,v\in\W$ to be {\em conjugate} if they are cyclic rotations of each other, that is, there exist words $r$ and $s$ such that $u=rs$ and $v=sr$.
A {\em necklace} is a conjugacy class of words in $\W$. 
A nonempty word $u$ is {\em primitive} if it is not the power of another word, i.e., it is not of the form $u=r^j$ for $j\ge2$.
A necklace is {\em primitive} if it is the conjugacy class of a primitive word.

Let $\M_n$ be the set consisting of all multisets of primitive necklaces of total length $n$. Given $M\in\M_n$, its {\em cycle structure} is the partition of $n$ whose parts are the lengths of the necklaces in the multiset, and its {\em weight} is the monomial $\wt(M)=x_1^{\alpha_1}x_2^{\alpha_2}\dots x_k^{\alpha_k}$, where $\alpha_i$ is the number of times that $a_i$ appears in $M$.

\subsection{Descent sets contained in $S$}

In \cite[Lem.~3.4]{GR}, Gessel and Reutenauer describe a bijection $U$ (later denoted by $\Phi$ in~\cite{GRR}) between words and multisets of necklaces. For our purposes, it will be more convenient (and simpler) to interpret it as a map on permutations; specifically, as a bijection
$$\Phi_S:\{\pi\in\S_n:\Des(\pi)\subseteq S\}\to\{M\in\M_n:\wt(M)=\x^{\alpha(S)}\}$$ that preserves the cycle structure. Here is a description of $\Phi_S$.

Given $\pi\in\S_n$ with $\Des(\pi)\subseteq S$, write $\pi$ in cycle form, and replace entries $1,\dots,s_1$ with $a_1$, entries $s_1+1,\dots,s_2$ with $a_2$, and so on, finally replacing entries $s_{k-1}+1,\dots,n$ with $a_k$. This operation turns each cycle of $\pi$ into a necklace.
Let $\Phi_S(\pi)$ be the resulting multiset of necklaces, and note that it was weight $\x^{\alpha(S)}$. It is shown in \cite{GR} that these necklaces  are primitive, and that $\Phi_S$ is a bijection.

The inverse map can be described as follows. Given $M\in\M_n$ with $\wt(M)=\x^{\alpha(S)}$, label each element of each necklace by the periodic sequence obtained by reading the necklace starting at that element. Then, order these sequences lexicographically; if some necklaces appear with multiplicity, break the ties in a consistent way by first ordering the repeated necklaces. This order assigns a number from $1$ to $n$ to each element of each necklace, which produces the permutation $\Phi_S^{-1}(M)$ in cycle form.

In order to describe the image of $\Phi_S$ when restricted to the right-hand side of equation~\eqref{eq:main}, let $\Me_n$ be the set of elements in $\M_n$ which consist of necklaces of even length only, except possibly for one necklace of length one. Since the bijection $\Phi_S$ preserves the cycle structure, we obtain the following result.

\begin{proposition}\label{prop:Phi}
The map $\Phi_S$ defined above restricts to a bijection $$\Phi_S:\{\pi\in\Se_n: \Des(\pi)\subseteq S\}\to\{M\in\Me_n:\wt(M)=\x^{\alpha(S)}\}.$$
\end{proposition}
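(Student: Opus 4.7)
The plan is to observe that the statement is essentially a corollary of the fact, already recalled from \cite{GR}, that $\Phi_S$ is a bijection between $\{\pi\in\S_n:\Des(\pi)\subseteq S\}$ and $\{M\in\M_n:\wt(M)=\x^{\alpha(S)}\}$ that \emph{preserves cycle structure}. So the entire content to verify is that the conditions defining $\Se_n$ and $\Me_n$ match under this cycle-structure correspondence.

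First I would spell out that the cycle structure of $\pi\in\S_n$ is the partition of $n$ given by the multiset of cycle lengths, while the cycle structure of $M\in\M_n$ is the multiset of necklace lengths. Under $\Phi_S$, each cycle of $\pi$ becomes a necklace of the same length (with entries relabeled by the blocks of $S$), so a cycle of length $\ell$ in $\pi$ corresponds to a necklace of length $\ell$ in $\Phi_S(\pi)$. Consequently, $\pi$ has all cycles of even length except possibly one fixed point if and only if $\Phi_S(\pi)$ has all necklaces of even length except possibly one necklace of length one, which is precisely the condition defining $\Me_n$. Thus $\Phi_S$ maps $\{\pi\in\Se_n:\Des(\pi)\subseteq S\}$ into $\{M\in\Me_n:\wt(M)=\x^{\alpha(S)}\}$, and the same observation applied to $\Phi_S^{-1}$ gives the reverse inclusion.

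The only mild point to check is that a fixed point of $\pi$, which becomes a single letter $a_j$ viewed as a one-element cycle, indeed yields a primitive necklace of length one; this is immediate, since a single letter is trivially primitive. Since $\Phi_S$ is already a bijection on the larger domain and both $\Se_n$ and $\Me_n$ are just the cycle-structure-restricted subsets on each side, the restriction is automatically a bijection with inverse the restriction of $\Phi_S^{-1}$. There is no substantive obstacle here: the work has already been done in verifying that $\Phi_S$ is a weight-preserving, cycle-structure-preserving bijection on the full domain.
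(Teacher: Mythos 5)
Your proposal is correct and matches the paper's argument exactly: the paper derives the proposition in one line from the fact that $\Phi_S$ preserves cycle structure, which is precisely the observation you elaborate. The extra details you supply (the correspondence of cycle lengths to necklace lengths and the fixed-point case) are accurate but not beyond what the paper treats as immediate.
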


\begin{example}\label{ex:Phi}
Let $S=\{4,7\}$, and let $\pi=45672381\in\Se_8$, which has $\Des(\pi)=\{4,7\}\subseteq S$. 
Writing $\pi$ in cycle form as $(3,6)(2,5)(1,4,7,8)$ and replacing entries $1,2,3,4$ with $\a$, entries $5,6,7$ with $\b$, and entry $8$ with $\c$, we obtain the multiset of necklaces
$\Phi_S(\pi)=(\a,\b)(\a,\b)(\a,\a,\b,\c)$.

To recover $\pi$ from $\Phi_S(\pi)$, replace the elements of the necklaces by their periodic labels,
$$(\a\b\a\b\dots,\b\a\b\a\dots)(\a\b\a\b\dots,\b\a\b\a\dots)(\a\a\b\c\dots,\a\b\c\a\dots,\b\c\a\a\dots,\c\a\a\b\dots)$$
and order these labels lexicographically, that is,
$$\a\a\b\c\dots<\a\b\a\b\dots=\a\b\a\b\dots<\a\b\c\a\dots<\b\a\b\a\dots=\b\a\b\a\dots<\b\c\a\a\dots<\c\a\a\b\dots.$$
Consistently breaking the ties in the repeated necklace, we recover
$\pi=(3,6)(2,5)(1,4,7,8)$.
\end{example}

\subsection{Ascent sets contained in $S$}

The bijection $\Phi_S$ does not work well with permutations whose ascent set is contained in $S$. Instead, to deal with the left-hand side of equation~\eqref{eq:main}, we will use a different bijection that has appeared in work of Gessel, Restivo and Reutenauer~\cite{GRR}, and is also a special case of a bijection due to Steinhardt~\cite{Steinhardt}.

In \cite[Sec.~3]{GRR}, the authors describe a bijection $\Xi$ between words of length $n$ and multisets of necklaces of total length $n$ that satisfy two conditions: each necklace is either primitive or of the form $(uu)$ for some primitive word $u$ of odd length, and each necklace of odd length appears with multiplicity at most one. As we did for $\Phi$, it will be convenient to interpret $\Xi$ as a map on permutations. For our purposes, it suffices to describe the map on permutations that consist of only odd cycles. Because of the above conditions, the resulting multisets of necklaces consist of primitive necklaces of odd length, all of which are distinct. This greatly simplifies the construction, and it is the reason that, in our statement of Theorem~\ref{thm:AHR}, we switched the roles of $\Asc$ and $\Des$ with respect to~\cite{AHR}. Denote by $\Mo_n$ the set of elements in $\M_n$ which consist of distinct necklaces of odd length.

A slight modification of $\Xi$ gives a bijection
$$\Xi_S:\{\pi\in\So_n: \Asc(\pi)\subseteq S\}\to\{M\in\Mo_n:\wt(M)=\x^{\alpha(S)}\}$$
that preserves the cycle structure. The description of $\Xi_S$ that we give next is almost identical to that of  $\Phi_S$, but the key difference appears when describing their inverses.
We point out that the connection to ascent sets of permutations is not mentioned in~\cite{GRR}, but it is in~\cite{Steinhardt}.

Given $\pi\in\So_n$ with $\Asc(\pi)\subseteq S$, write $\pi$ in cycle form, and replace entries $1,\dots,s_1$ with $a_1$, 
entries $s_1+1,\dots,s_2$ with $a_2$, and so on, finally replacing entries $s_{k-1}+1,\dots,n$ with $a_k$.
Let $\Xi_S(\pi)$ be the resulting multiset of necklaces, which has weight $\x^{\alpha(S)}$. Since $\pi\in\So_n$, all the necklaces in $\Xi_S(\pi)$ have odd length, which implies, as shown in~\cite{GRR,Steinhardt}, that they are primitive and distinct. It follows that $\Xi_S(\pi)\in\Mo_n$.

To describe the inverse map, we first need to define the {\em alternating lexicographic order} on words, denoted by $\lalt$.
It is defined by $\u_1\u_2\u_3\dots\lalt \v_1\v_2\v_3\dots$ if either $\u_1<\v_1$, or $\u_1=\v_1$ and $\v_2\v_3\dots\lalt \u_2\u_3\dots$.

Given $M\in\Mo_n$ with $\wt(M)=\x^{\alpha(S)}$, label each element of each necklace by the periodic sequence obtained by reading the necklace starting at that element. Then, order these sequences according to $\lalt$. Note that there will be no ties, since all the necklaces in $M$ are primitive and distinct. This order assigns a number from $1$ to $n$ to each element of each necklace, which produces the permutation $\Xi_S^{-1}(M)$ in cycle form.

The fact that these constructions are inverses from each other is proved in~\cite{GRR,Steinhardt}. We deduce the following.

\begin{proposition}\label{prop:Xi}
The map $\Xi_S$ defined above is a bijection $$\Xi_S:\{\pi\in\So_n: \Asc(\pi)\subseteq S\}\to\{M\in\Mo_n:\wt(M)=\x^{\alpha(S)}\}.$$
\end{proposition}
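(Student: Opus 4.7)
The plan is to check that $\Xi_S$ is well-defined with image in the stated codomain, that the inverse $\lalt$-ranking construction is also well-defined, and that the two are mutual inverses. Most of the technical work has been done in~\cite{GRR,Steinhardt}; the remaining task is to track the hypothesis $\Asc(\pi)\subseteq S$ through the construction.

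For well-definedness of $\Xi_S$, the output has weight $\x^{\alpha(S)}$ by construction, and every necklace has odd length since every cycle of $\pi$ does. The delicate point is that these odd-length necklaces are primitive and pairwise distinct. I would invoke the underlying result about $\Xi$ from~\cite{GRR,Steinhardt}: in general $\Xi$ allows necklaces of the form $(uu)$ with $u$ primitive of odd length (of even total length $2|u|$), and allows repetition only for necklaces of even length. Both possibilities are ruled out here by the odd-cycle assumption, so $\Xi_S(\pi)\in\Mo_n$.

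For the inverse, given $M\in\Mo_n$ with $\wt(M)=\x^{\alpha(S)}$, primitivity and distinctness ensure that the $n$ periodic sequences attached to the elements of $M$ are pairwise distinct under $\lalt$, so the $\lalt$-ranking produces a valid labeling by $[n]$ and a permutation $\pi$ in cycle form with all cycles of odd length. Since $\lalt$ sorts first by first letter, the $\alpha_j$ elements whose periodic sequence begins with $a_j$ receive labels $s_{j-1}+1,\dots,s_j$, so applying the letter substitution to $\pi$ returns $M$. The nontrivial point is that $\Asc(\pi)\subseteq S$: if $i\notin S$, the elements $x,y$ labeled $i,i+1$ share the same initial letter in their periodic sequences $P(x),P(y)$, so by definition of $\lalt$ the inequality $P(x)\lalt P(y)$ unfolds to $P(\pi(y))\lalt P(\pi(x))$, meaning that the label of $\pi(y)$ is smaller than the label of $\pi(x)$, i.e., $\pi(i+1)<\pi(i)$, so $i\in\Des(\pi)$. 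The composition $\Xi_S^{-1}\circ\Xi_S$ is handled by running the same unfolding in reverse: the hypothesis $\Asc(\pi)\subseteq S$ forces the original labels of $\pi$ to coincide with the $\lalt$-ranking of the associated periodic sequences.

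The main obstacle is the primitivity-and-distinctness claim in the first step, which is the core content of the bijection $\Xi$ in~\cite{GRR} and precisely the reason that the alternating order $\lalt$ rather than ordinary lexicographic order is required when dealing with ascent sets. I would not reprove this from scratch; rather, I would verify that the restriction to permutations in $\So_n$ is exactly what eliminates both the $(uu)$-type necklaces and the repeated necklaces appearing in the general version of $\Xi$, and that the remainder of the verification is a routine adaptation of the proofs in~\cite{GRR,Steinhardt}.
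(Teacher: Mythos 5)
Your proposal is correct and takes essentially the same route as the paper: both treat the primitivity, distinctness, and inverse-pair claims as the content of the general bijection $\Xi$ from~\cite{GRR,Steinhardt}, observe that the odd-cycle hypothesis rules out the $(uu)$-type and repeated necklaces, and reduce the rest to a direct check. Your explicit unfolding of $\lalt$ to show $\Asc(\pi)\subseteq S$ is a correct piece of detail that the paper leaves to the cited references.
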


\begin{example}\label{ex:Xi}
Let $S=\{4,7\}$, and let $\pi=86325417\in\So_8$, which has $\Asc(\pi)=\{4,7\}\subseteq S$. 
Writing $\pi$ in cycle form as $(5)(3)(2,6,4)(1,8,7)$ and replacing entries $1,2,3,4$ with $\a$, entries $5,6,7$ with $\b$, and entry $8$ with $\c$, we obtain the multiset of necklaces
$\Xi_S(\pi)=(\b)(\a)(\a,\b,\a)(\a,\c,\b)$.

To recover $\pi$ from $\Xi_S(\pi)$, replace the elements of the necklaces by their periodic labels,
$$(\b\b\b\dots)(\a\a\a\dots)(\a\b\a\dots,\b\a\a\dots,\a\a\b\dots)(\a\c\b\dots,\c\b\a\dots,\b\a\c\dots)$$
and order these labels according to $\lalt$, that is,
$$\a\c\b\dots\lalt \a\b\a\dots\lalt \a\a\a\dots\lalt \a\a\b\dots\lalt \b\b\b\dots\lalt \b\a\a\dots\lalt \b\a\c\dots\lalt \c\b\a\dots,$$
to recover
$\pi=(5)(3)(2,6,4)(1,8,7)$.
\end{example}

Combining Propositions~\ref{prop:Phi} and~\ref{prop:Xi}, equation~\eqref{eq:OEsubset} is equivalent to the equality
\begin{equation}\label{eq:OEnecklaces}
|\{M\in\Mo_n:\wt(M)=\x^{\alpha(S)}\}|=|\{M\in\Me_n:\wt(M)=\x^{\alpha(S)}\}|.
\end{equation}
In Section~\ref{sec:GF} we will give a short proof of this equality using generating functions. In Section~\ref{sec:bijection} we will give a bijective proof.

\section{Lyndon factorizations and generating functions}\label{sec:GF}

In this section we identify primitive necklaces with Lyndon words, which allows us to view multisets of primitive necklaces as Lyndon factorizations of words.
In the rest of the paper, we use $<$ to denote the lexicographic order on $\W$. The terms smaller and larger, when applied to words, will always refer to lexicographically smaller and larger, whereas we will use the adjectives shorter and longer when comparing lengths.

A primitive word in $\W$ is called a {\em Lyndon word} if it is smaller than all the other words in its conjugacy class; equivalently, if it is smaller than all of its proper suffixes~\cite[Prop.~5.1.2]{Lothaire}. Lyndon words are in one-to-one correspondence with primitive necklaces, since each conjugacy class of primitive words has a unique lexicographically smallest element.
Denote by $\L$ the set of Lyndon words in $\W$. We write $\L=\Lo\sqcup\Le$, where $\Lo$ and $\Le$ are the subsets consisting of words of odd and even length, respectively. From now on, we will say that a word is {\em odd} or {\em even} to mean that it has odd or even length, respectively.

The following is a well-known result of Lyndon.

\begin{theorem}[{\cite[Thm.~5.1.5]{Lothaire}}]\label{thm:Lyndon}
Every $w\in\W$ has a unique {\em Lyndon factorization}, that is, an expression $w=\ell_1\ell_2\dots \ell_m$ where $\ell_i\in\L$ for all $i$, and $\ell_1\ge\ell_2\ge\dots\ge\ell_m$. 
\end{theorem}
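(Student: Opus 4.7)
My plan is to prove existence and uniqueness separately, using as a key auxiliary fact the standard lemma that if $u,v\in\L$ with $u<v$ then the concatenation $uv$ is also in $\L$. For existence, I would start from the trivial factorization of $w$ into its individual letters (each a Lyndon word) and, as long as there exist two adjacent factors $\ell_i<\ell_{i+1}$, merge them into $\ell_i\ell_{i+1}\in\L$. Each merge strictly decreases the number of factors, so the procedure terminates, and when it does no adjacent strict ascent remains, giving a weakly decreasing Lyndon factorization.

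For uniqueness, I would show by induction on $|w|$ that the last factor $\ell_m$ in any weakly decreasing Lyndon factorization $w=\ell_1\cdots\ell_m$ is forced to equal the lexicographically smallest nonempty suffix of $w$, a quantity intrinsic to $w$. Given this, removing $\ell_m$ yields a shorter word whose remaining factorization $\ell_1\cdots\ell_{m-1}$ is still weakly decreasing, hence unique by the inductive hypothesis. To identify $\ell_m$, take any nonempty suffix $s$ of $w$ and write $s=\ell'_j\ell_{j+1}\cdots\ell_m$, where $\ell'_j$ is a nonempty suffix of $\ell_j$. Since $\ell_j\in\L$ we have $\ell'_j\ge\ell_j$, and combined with the decreasing condition this gives $\ell'_j\ge\ell_j\ge\ell_m$; a short case analysis on whether $\ell'_j=\ell_m$ or $\ell'_j>\ell_m$ (the case $\ell'_j<\ell_m$ being ruled out by the chain of inequalities) then yields $s\ge\ell_m$, with equality iff $j=m$ and $\ell'_m=\ell_m$, i.e., $s=\ell_m$.

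The main obstacle I expect is the careful lexicographic bookkeeping in the uniqueness step: since $\ell'_j$ and $\ell_m$ may have different lengths and one may be a proper prefix of the other, the inequality $s\ge\ell_m$ cannot simply be read off from $\ell'_j\ge\ell_m$, and one has to use both that $\ell_m$ is Lyndon (so strictly smaller than its proper suffixes) and that $s$ continues past $\ell'_j$ as $\ell'_j\ell_{j+1}\cdots\ell_m$ whenever $j<m$. The concatenation lemma used for existence is itself a small exercise in the same kind of lex comparisons, namely showing that every proper suffix of $uv$ is either a proper suffix of $v$ or of the form $u'v$ with $u'$ a nonempty suffix of $u$, and in each case exceeds $uv$ thanks to $u<v$ and the Lyndon property of $u$ and $v$.
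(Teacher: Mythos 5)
The paper does not prove this theorem; it is quoted verbatim from Lothaire \cite[Thm.~5.1.5]{Lothaire} as a known result, so there is no internal proof to compare against. Your argument is correct and is essentially the classical one. Existence by repeatedly merging an adjacent ascending pair $\ell_i<\ell_{i+1}$ of Lyndon factors works, granting the standard concatenation lemma (which the paper records as Lemma~\ref{lem:standard}); the factor count strictly decreases, so termination is immediate. For uniqueness, your identification of $\ell_m$ with the lexicographically smallest nonempty suffix of $w$ is exactly \cite[Prop.~5.1.6]{Lothaire}, a fact the paper itself invokes and essentially re-derives in Lemma~\ref{lem:LRmin} via left-to-right minima of the suffix sequence. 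Your case analysis is sound: from $\ell'_j\ge\ell_j\ge\ell_m$, the case $\ell'_j>\ell_m$ gives $s=\ell'_j\ell_{j+1}\cdots\ell_m>\ell_m$ because appending letters to the larger word cannot reverse a strict lexicographic inequality against a word it is not a proper prefix of (and $\ell'_j>\ell_m$ rules out $\ell'_j$ being a proper prefix of $\ell_m$), while $\ell'_j=\ell_m$ forces $\ell'_j=\ell_j=\ell_m$ and then $s=\ell_m$ exactly when $j=m$, otherwise $\ell_m$ is a proper prefix of $s$ and $s>\ell_m$. The one point worth making explicit when you write this up is the prefix bookkeeping you already flag: the inequality $u>v$ implies $uw>v$ for every word $w$ needs the observation that either $v$ is a proper prefix of $u$ (hence of $uw$) or the first disagreement occurs within the common length, and this is also where the Lyndon property of $\ell_m$ (being strictly smaller than its proper suffixes) enters. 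No gaps.
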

 
We will use vertical bars to indicate that $w=\ell_1|\ell_2|\dots|\ell_m$ is the Lyndon factorization of $w$, and call the words $\ell_i$ the {\em Lyndon factors} of $w$. By identifying primitive necklaces with Lyndon words, Theorem~\ref{thm:Lyndon} gives a straightforward bijection between $\M_n$ and $\W_n$, where each necklace in $M\in\M_n$ becomes a Lyndon factor of the associated word $w\in\W_n$. 
For instance, the multiset $(\a,\b)(\a,\b)(\a,\a,\b,\c)$ in Example~\ref{ex:Phi} corresponds to the word $\a\b|\a\b|\a\a\b\c$, and the multiset 
$(\b)(\a)(\a,\b,\a)(\a,\c,\b)$ in Example~\ref{ex:Xi} corresponds to the word $\b|\a\c\b|\a\a\b|\a$, where the vertical bars indicate their unique Lyndon factorizations. 

As we did for multisets of necklaces, we define the {\em weight} of a word $w\in \W$ to be the monomial $\wt(w)=x_1^{\alpha_1}x_2^{\alpha_2}\dots x_k^{\alpha_k}$, where $\alpha_i$ is the number of times that $a_i$ appears in $w$. The {\em length} of $w$ is $|w|=\alpha_1+\alpha_2+\dots+\alpha_k$.
Note that if $w=\ell_1\ell_2\dots \ell_m$, then $\wt(w)=\wt(\ell_1)\wt(\ell_2)\cdots \wt(\ell_m)$. 

Let $\Wo_n$ be the set of words in $\W_n$ all of whose Lyndon factors have odd length and are distinct. Let $\We_n$ be the set of words in $\W_n$ all of whose Lyndon factors have even length, except possibly for one factor which has length one (note that a factor of length one occurs precisely when $n$ is odd). 

The above correspondence between mutlisets of necklaces and words gives straightforward bijections between $\Mo_n$ and $\Wo_n$, and between $\Me_n$ and $\We_n$. With this identification, Equation~\eqref{eq:OEnecklaces} is equivalent to the following equality, which we prove using generating functions.

\begin{proposition}\label{prop:Lyndon}
For any composition $\alpha$ of $n$,
$$|\{w\in\Wo_n:\wt(w)=\x^\alpha\}|=|\{w\in\We_n:\wt(w)=\x^\alpha\}|.$$
\end{proposition}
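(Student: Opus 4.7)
The plan is to compute the weight-generating functions
$$F_o(\x) = \sum_{w \in \Wo} \wt(w), \qquad F_e(\x) = \sum_{w \in \We} \wt(w)$$
and prove that $F_o(\x) = F_e(\x)$ as formal power series in $x_1, \ldots, x_k$; extracting the coefficient of $\x^\alpha$ then gives Proposition~\ref{prop:Lyndon}.

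The first step is to translate each side into an infinite product over Lyndon words via Theorem~\ref{thm:Lyndon}. A word in $\Wo$ is determined by an arbitrary \emph{subset} of $\Lo$ (its Lyndon factors, distinct and odd, arranged in decreasing order), so
$$F_o(\x) = \prod_{\ell \in \Lo} (1 + \wt(\ell)).$$
A word in $\We$ is determined by a multiset of elements of $\Le$ together with at most one single letter (which, if present, sits at the end of the decreasing arrangement; such a letter appears exactly when the total length is odd). Writing $X = x_1 + \cdots + x_k$, this gives
$$F_e(\x) = (1 + X) \prod_{\ell \in \Le} \frac{1}{1 - \wt(\ell)}.$$

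Next, I would invoke the classical identity $\prod_{\ell \in \L}(1 - \wt(\ell)) = 1 - X$, which is equivalent to Theorem~\ref{thm:Lyndon} (every word has a unique Lyndon factorization, so $\prod_{\ell \in \L}1/(1-\wt(\ell)) = \sum_{w \in \W}\wt(w) = 1/(1-X)$). Splitting this product by parity to solve for $\prod_{\ell \in \Le}1/(1-\wt(\ell))$ and substituting into $F_e(\x)$ reduces the desired equality $F_o(\x) = F_e(\x)$ to
$$\prod_{\ell \in \Lo} \frac{1 + \wt(\ell)}{1 - \wt(\ell)} \ = \ \frac{1 + X}{1 - X}. \qquad (\star)$$

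To prove $(\star)$, I would take logarithms. Using $\log\tfrac{1+y}{1-y} = 2\sum_{j\text{ odd}}y^j/j$, identity $(\star)$ becomes
$$\sum_{\ell \in \Lo} \sum_{j \text{ odd}} \frac{\wt(\ell)^j}{j} \ = \ \sum_{j \text{ odd}} \frac{X^j}{j}.$$
Extract the component of total degree $n$. If $n$ is even, both sides vanish: on the left, $j$ odd and $|\ell|$ odd force the product $j|\ell|$ to be odd, so no term of degree $n$ exists. If $n$ is odd, every divisor $d$ of $n$ is odd and so is $n/d$, and every Lyndon word of odd length lies in $\Lo$, so the parity constraints are automatic and the left side equals the degree-$n$ component of $\log\prod_{\ell \in \L}1/(1-\wt(\ell)) = \log 1/(1-X)$, which is $X^n/n$, matching the right.

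\medskip
The whole argument is short, and the main obstacle is mostly psychological: recognizing that the identity collapses, after two elementary manipulations, to a parity check against the log of the fundamental Lyndon identity. The one place requiring care is the bookkeeping in the last step, verifying that the restrictions ``$\ell \in \Lo$'' and ``$j$ odd'' on the left pick out exactly the odd-degree part of the full Lyndon/Witt identity.
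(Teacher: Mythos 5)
Your proof is correct, and it follows essentially the same route as the paper: both arguments rest on the same two product formulas for $F_o$ and $F_e$, and both ultimately reduce the claim to the fundamental identity $\prod_{\ell\in\L}(1-\wt(\ell))=1-(x_1+\cdots+x_k)$ coming from unique Lyndon factorization. The only real difference is the finishing manipulation: the paper rewrites the desired equality as $\prod_{\ell\in\Lo}(1+\wt(\ell))\prod_{\ell\in\Le}(1-\wt(\ell))=1+x_1+\cdots+x_k$ and then substitutes $x_i\mapsto -x_i$, which flips the sign of $\wt(\ell)$ exactly for odd $\ell$ and turns the identity into the fundamental one in a single line; you instead take logarithms of $(\star)$ and match homogeneous components via a parity check. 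Both finishes are valid; the sign substitution is shorter and stays over $\mathbb{Z}$, while your version makes explicit the fact that only odd total degrees carry any information. One small inaccuracy that does not affect the count: the length-one factor of a word in $\We_n$ need not sit at the end of the weakly decreasing arrangement (e.g.\ $\c\d|\c|\a\b$ is a valid Lyndon factorization with the singleton in the middle); since the weakly decreasing arrangement of any finite multiset of Lyndon words is unique in any case, your formula $F_e=(1+x_1+\cdots+x_k)\prod_{\ell\in\Le}(1-\wt(\ell))^{-1}$ remains correct.
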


\begin{proof}
The generating function for the set $\Wo=\bigcup_{n\ge0}\Wo_n$ of words whose Lyndon factors are odd and distinct is
$$\sum_{w\in\Wo} \wt(w)=\prod_{\ell\in\Lo} \left(1+\wt(\ell)\right).$$
The generating function for the set $\We=\bigcup_{n\ge0}\We_n$ of words whose Lyndon factors are even, except for possibly one factor of length one, is
$$\sum_{w\in\We} \wt(w)=(1+x_1+x_2+\dots+x_k)\prod_{\ell\in\Le} \frac{1}{1-\wt(\ell)}.$$

Thus, the statement that we want to prove can be rephrased in terms of generating functions as
\begin{equation}\label{eq:Lyndon-GF}
\prod_{\ell\in\Lo} \left(1+\wt(\ell)\right)=(1+x_1+x_2+\dots+x_k)\prod_{\ell\in\Le} \frac{1}{1-\wt(\ell)},
\end{equation}
or equivalently,
\begin{equation}\label{eq:parity_number_of_factors}
\prod_{\ell\in\Lo} \left(1+\wt(\ell)\right)\prod_{\ell\in\Le} \left(1-\wt(\ell)\right)=1+x_1+x_2+\dots+x_k.
\end{equation}
Substituting $x_i$ for $-x_i$ for $1\le i\le k$, and noting that this change reverses the sign of $\wt(\ell)$ if $\ell$ has odd length but preserves it otherwise, we can rewrite~\eqref{eq:parity_number_of_factors} as
$$\prod_{\ell\in\Lo} \left(1-\wt(\ell)\right)\prod_{\ell\in\Le} \left(1-\wt(\ell)\right)=1-x_1-x_2-\dots-x_k$$
Taking reciprocals, this is equivalent to
$$\prod_{\ell\in\L} \frac{1}{1-\wt(\ell)}=\frac{1}{1-x_1-x_2-\dots-x_k},$$
which holds because every word in $\W$ has a unique Lyndon factorization by Theorem~\ref{thm:Lyndon}.
\end{proof}

\section{A bijection from odd and distinct to even Lyndon factorizations}\label{sec:bijection}

In this section we complete the proof of Theorem~\ref{thm:main} by giving a bijective proof of Proposition~\ref{prop:Lyndon}.
In Subsection~\ref{sec:psi} we describe a weight-preserving bijection $\Psi:\Wo_n\to\We_n$, in Subsection~\ref{sec:psinv} we describe its inverse $\Psinv$, and in Subsections~\ref{sec:auxiliary} and~\ref{sec:properties} we prove that $\Psi$ and $\Psinv$ are indeed inverses of each other.

\subsection{The bijection $\Psi$}\label{sec:psi}

The definition of $\Psi$ uses the notion of standard factorization of a Lyndon word, which we describe next. A proper suffix of $w$ is a suffix that is not empty and not equal to $w$.

\begin{lemma}[{\cite[Prop.~5.1.3]{Lothaire}}]\label{lem:standard} A word $w\in\W$ is a Lyndon word if and only if $|w|=1$ or $w=rs$ with $r,s\in\L$ and $r<s$. 
Additionally, if $w\in\L$ with $|w|\ge2$, and $s$ is the longest proper suffix of $w$ that belongs to $\L$, then $r\in\L$ and $r<rs<s$. 
\end{lemma}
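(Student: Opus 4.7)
The plan is to prove the biconditional and the additional ``standard factorization'' claim simultaneously, after first establishing the key auxiliary fact: if $u,v\in\L$ with $u<v$, then $uv\in\L$. This immediately yields the backward direction of the biconditional, while the additional statement will pin down precisely which pair $(r,s)$ arises when $w\in\L$ and $|w|\ge 2$.

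To prove the auxiliary fact, I would show that $uv$ is lexicographically smaller than each of its proper suffixes (primitivity is then automatic). A proper suffix of $uv$ is either (a) a proper suffix of $v$, (b) $v$ itself, or (c) of the form $u'v$ where $u'$ is a proper suffix of $u$. Case (a) follows by combining $uv<v$ with the fact that $v$ is smaller than each of its proper suffixes. Case (c) uses $u<u'$ from $u\in\L$; since $u'$ is strictly shorter than $u$, the inequality $u<u'$ must be witnessed by a first differing position at which $u$ has the smaller letter, and this same position gives $uv<u'v$. The hard step is (b): if $u$ is not a prefix of $v$, the first differing position of $u<v$ immediately yields $uv<v$; if $u$ is a prefix of $v$, write $v=ut$ with $t$ nonempty, observe that $t$ is a proper suffix of $v\in\L$ so $v<t$, and cancel the common prefix $u$ to obtain $uv<v$.

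For the forward direction together with the additional statement, assume $w\in\L$ with $|w|\ge 2$, and let $s$ be the longest proper Lyndon suffix of $w$; such an $s$ exists because the final letter is itself a one-letter Lyndon suffix. Write $w=rs$ with $|r|\ge 1$. The inequality $rs<s$ is immediate since $w\in\L$ and $s$ is a proper suffix, and $r<rs$ holds because $r$ is a proper prefix of $rs$. To show $r\in\L$, consider its Lyndon factorization $r=r_1r_2\cdots r_m$ with $r_1\ge\cdots\ge r_m$. If $r_m\ge s$, then $r_1\ge\cdots\ge r_m\ge s$ is a weakly decreasing concatenation of Lyndon words equal to $w$, so by uniqueness (Theorem~\ref{thm:Lyndon}) it must coincide with the trivial Lyndon factorization $(w)$ of $w\in\L$, forcing $r$ to be empty and hence $w=s$, contradicting that $s$ is a proper suffix. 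Otherwise $r_m<s$, and the auxiliary fact gives $r_m s\in\L$; if $m\ge 2$ this would be a proper Lyndon suffix of $w$ strictly longer than $s$, contradicting the maximal choice of $s$. Hence $m=1$, so $r=r_1\in\L$, completing the proof.

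The main obstacle I expect is the case analysis inside the auxiliary fact, specifically the proof that $uv<v$ when $u<v$ are both Lyndon: this is the only place where the Lyndon property of $v$ must actually be invoked (via one of its proper suffixes), rather than treating $u$ and $v$ as arbitrary words under the lexicographic order.
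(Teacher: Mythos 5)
Your proposal is correct. Note, however, that the paper does not prove this lemma at all: it is quoted verbatim from Lothaire (Prop.~5.1.3) and used as a black box, so there is no in-paper argument to compare against. Your write-up is essentially the standard textbook proof: the auxiliary fact that $u,v\in\L$ with $u<v$ implies $uv\in\L$ (handled by the three-way case analysis on proper suffixes of $uv$, with the genuinely nontrivial case being $uv<v$ via $v=ut$ and $v<t$), followed by the argument that the complementary prefix $r$ of the longest proper Lyndon suffix $s$ must itself be Lyndon, using uniqueness of the Lyndon factorization of $r$ and maximality of $s$ to rule out $m\ge 2$. All steps check out: in case (c) the observation that $u<u'$ with $|u'|<|u|$ forces a first differing position inside $u'$ is exactly what is needed to conclude $uv<u'v$, and the dichotomy $r_m\ge s$ versus $r_m<s$ cleanly exhausts the possibilities. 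The only point worth spelling out one line further is the parenthetical ``primitivity is then automatic'': under the paper's definition a Lyndon word is a \emph{primitive} word smaller than all its proper suffixes, so you should record that a word $p^k$ with $k\ge 2$ has the proper suffix $p^{k-1}$, which is a proper prefix of $p^k$ and hence lexicographically smaller, so no imprimitive word can be smaller than all of its proper suffixes. With that sentence added, the proof is complete and self-contained.
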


When $s$ is the longest proper suffix of $w$ that belongs to $\L$, the expression $w=rs$ is called the {\em standard factorization} of $w$. 
We will denote this by $w=r\dashline s$. A useful equivalent characterization of $s$ is that it is also the (lexicographically) smallest proper suffix of $w$. Indeed, being the smallest guarantees that $s\in\L$, and that no longer proper suffix belongs to $\L$. 

Given $w\in\Wo_n$, we will build $\Psi(w)$ by repeatedly applying certain updates to a pair of words $(O,E)$. Initially, $(O,E)=(w,-)$, where $-$ denotes the empty word. Each step moves some subword from $O$ to the beginning of $E$. At any time, all the Lyndon factors of $O$ are odd and distinct, and all the Lyndon factors of $E$ are even. At the end of the algorithm, we have $(O,E)=(-,\Psi(w))$. For convenience, define a word, denoted by $\infty$, which satisfies $w<\infty$ for any $w\in\W$.

\begin{definition}[The map $\Psi$]\label{def:psi} On input $w\in\Wo_n$, initially set $(O,E)=(w,-)$, and iterate the following step as long as $|O|\ge2$:
\begin{itemize}
\item [$(\psi)$]
Let $O=o_1|o_2|\dots|o_m$ be the Lyndon factorization of $O$. 
Say that $o_m$ is {\em splittable} if $|o_m|\ge2$ and its standard factorization $o_m=r\dashline s$ satisfies $s<o_{m-1}$ (with the convention $o_{m-1}=\infty$ if $m=1$).
Update $(O,E)$ to
$$(O',E')=\begin{cases}
(o_1o_2\dots o_{m-1}r,\,sE) & \text{if $o_m$ is splittable and $r$ is odd}, \hfill \quad\cS\\
(o_1o_2\dots o_{m-1}s,\,rE) & \text{if $o_m$ is splittable and $r$ is even}, \hfill \quad\cP\\
(o_1o_2\dots o_{m-2},\,o_m o_{m-1} E) & \text{if $o_m$ is not splittable}. \hfill \quad\cF
\end{cases}$$
\end{itemize}

If we reach $|O|=1$ (this case only occurs when $n$ is odd), move this letter to the Lyndon factorization of $E$ by inserting it as a new factor, in the unique location that keeps the factors weakly decreasing from left to right.

Once $O$ is empty, let $\Psi(w)=E$.
\end{definition}

The steps of type \cS, \cP\ and \cF\ are named after {\em suffix}, {\em prefix} and {\em flip}, respectively. 
Note that, since $o_m=rs$ has odd length, $r$ is odd if and only if $s$ is even.
In the rest of this section we prove the following theorem.

\begin{theorem}\label{thm:bijection}
The map $\Psi:\Wo_n\to\We_n$ is a weight-preserving bijection.
\end{theorem}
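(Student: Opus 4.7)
The plan is to verify three properties of $\Psi$: that the iteration in Definition~\ref{def:psi} terminates, that its output lies in $\We_n$, and that the map $\Psinv$ constructed in Subsection~\ref{sec:psinv} is a two-sided inverse. Weight preservation is immediate, since every step of $\psi$ merely relocates a contiguous block of letters from the tail of $O$ to the head of $E$. Termination is equally transparent: each update strictly decreases $|O|$, by $|s|\ge 1$ in \cS, by $|r|\ge 1$ in \cP, and by $|o_m|+|o_{m-1}|\ge 2$ in \cF, so after finitely many iterations $|O|\le 1$ and the loop exits.

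The core of the argument is the joint invariant to be maintained throughout the iteration: all Lyndon factors of $O$ are odd and pairwise distinct, and all Lyndon factors of $E$ are even. This holds initially because $w\in\Wo_n$. In each of the three step types, the word newly prepended to $E$ is always a Lyndon word: for \cS\ and \cP\ this follows from Lemma~\ref{lem:standard} applied to $o_m = r \dashline s$, while for \cF\ the strict inequality $o_m < o_{m-1}$ (a consequence of distinctness combined with the weakly decreasing property of Lyndon factorizations) together with Lemma~\ref{lem:standard} yields $o_m o_{m-1} \in \L$; this word is even since $o_m$ and $o_{m-1}$ are odd.

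Two non-trivial verifications remain, which I expect to be the main obstacle and which presumably drive the content of Subsections~\ref{sec:auxiliary} and~\ref{sec:properties}. First, for $E'$ to still have only even Lyndon factors, the newly prepended Lyndon word must be at least as large as the old leading factor of $E$, so that it becomes a valid leftmost factor of $E'$. This comparison must be controlled across successive steps, and the splittability condition $s < o_{m-1}$ (and its negation in \cF) seems tailored to this purpose. Second, for $O'$ to retain distinct odd Lyndon factors when $o_m$ is replaced by $r$ (in \cS) or by $s$ (in \cP), the new last factor must be Lyndon, odd, and distinct from $o_1,\dots,o_{m-1}$; oddness and the Lyndon property come from Lemma~\ref{lem:standard} and the parity split between \cS\ and \cP, while distinctness must be extracted from the structure of the standard factorization.

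Granting these invariants, the cleanup step applies only when $|O|=1$ at termination, which forces $n$ odd, and then inserts the remaining letter as the unique length-one Lyndon factor, producing an element of $\We_n$. For bijectivity, my approach is to show that any single step $(O,E)\to(O',E')$ of $\psi$ can be reversed by inspecting the leading Lyndon factor of $E'$ against the trailing Lyndon factors of $O'$, which should allow one to read off which of \cS, \cP, \cF\ was applied and undo it. I expect $\Psinv$ to be built around exactly this reverse lookup, so that $\Psinv\circ\Psi=\mathrm{id}$ and, symmetrically, $\Psi\circ\Psinv=\mathrm{id}$ follow by induction on the number of steps. The remaining work is this induction, together with the two invariant checks outlined above.
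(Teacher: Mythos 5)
Your outline correctly identifies the overall strategy (maintain invariants on $(O,E)$, then reverse each step), but it contains a genuine error in one of the two invariants you single out as the crux, and it underestimates what the reversal requires. You claim that, for $E'$ to keep only even Lyndon factors, ``the newly prepended Lyndon word must be at least as large as the old leading factor of $E$, so that it becomes a valid leftmost factor of $E'$.'' This is false, and the paper points it out explicitly after Example~\ref{ex:10}: in that example the successive prepended pieces are $\b\c\c\c$, then $\a\d$, then $\c\d$, then $\c\d$, and prepending $\a\d$ to $\b\c\c\c$ produces the single Lyndon factor $\a\d\b\c\c\c$, since $\a\d<\b\c\c\c$. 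The moved subwords are in general \emph{absorbed} into longer Lyndon factors of $E'$ rather than becoming factors themselves. The correct argument for evenness (Lemma~\ref{lem:O'E'}\ref{part:even}) instead uses the characterization of Lyndon factor boundaries as left-to-right minima of the suffix sequence (Lemma~\ref{lem:LRmin}): every such minimum of $\ell E$ other than the leftmost is already one of $E$, shifted by the even length $|\ell|$, so all boundaries stay at even offsets. You also misattribute the role of the splittability condition $s<o_{m-1}$: its purpose is to keep the Lyndon factorization of $O'$ strictly decreasing (hence distinct), via $r<rs<s<o_{m-1}$ from Lemma~\ref{lem:standard}, not to control the comparison with the head of $E$.

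The absorption phenomenon is also why your plan for invertibility is too optimistic. Since the leading Lyndon factor $e'_1$ of $E'$ may consist of several moved subwords glued together, one cannot ``read off'' the last step by comparing $e'_1$ with the trailing factors of $O'$; the inverse must re-split $e'_1$ by iterating standard factorizations relative to $o'_h$ (the ISF of Definition~\ref{def:psinv}), and proving that this peeling recovers exactly the pieces that were moved is the bulk of the paper's work. It requires additional quantitative invariants that your outline never states — e.g.\ $E'<o'_h$ after steps of type \cP\ and \cF\ but $o'_h<s\le E'$ after a step of type \cS\ (so the sign of the comparison distinguishes \cSi\ from \cPi/\cFi), and these are proved by backtracking through maximal runs of \cS\ steps, not by a one-step induction. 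As written, your proposal is a plan whose two ``remaining verifications'' are, respectively, aimed at a false statement and substantially harder than described; it does not yet constitute a proof.
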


Let us first give a few examples of the map $\Psi$. An implementation in SageMath of this bijection, as well as of its inverse, is available in~\cite{code}.

\begin{example}\label{ex:10}
Let $w=\d\a\d\c\c\d\b\c\c\c\in\Wo_{10}$, with Lyndon factorization $o_1|o_2=\d|\a\d\c\c\d\b\c\c\c$.
The rightmost factor has standard factorization $o_2=r\dashline s=\a\d\c\c\d\dashline \b\c\c\c$. Since $s<o_1$, the word $o_2$ is splittable, and since $s$ is even, we apply~\cS\ and move it to~$E$.

Now $O=o_1|o_2=\d|\a\d\c\c\d$, and $o_2=r\dashline s=\a\d\dashline \c\c\d$. Again $s<o_1$, so $o_2$ is splittable, and since $r$ is even, we apply~\cP\ and move it to $E$.

In the third iteration,  $O=o_1|o_2=\d|\c\c\d$, and $o_2=r\dashline s=\c\dashline \c\d$. Again $s<o_1$, so $o_2$ is splittable, and since $s$ is even, we apply~\cS\ and move it to $E$.

Now $O=\d|\c$, and $o_2=\c$ is not splittable because it has length one. Applying~\cF, we move $o_2o_1=\c\d$ to $E$, and we obtain $\Psi(w)=\c\d\c\d\a\d\b\c\c\c$. We can summarize these steps in a table as follows.
$$\begin{array}{lcr}
  O && E\\ \hline
 \d|\a\d\c\c\d\dashline \b\c\c\c && - \\
  \d|\a\d\dashline \c\c\d &^\cS& \b\c\c\c \\
  \d|\c\dashline \c\d &^\cP& \a\d\b\c\c\c \\
  \d|\c &^\cS& \c\d\a\d\b\c\c\c \\
  -&^\cF& \c\d\c\d\a\d\b\c\c\c 
\end{array}$$
\end{example}

The Lyndon factorization $\Psi(w)=\c\d|\c\d|\a\d\b\c\c\c$ in the above example illustrates that the subwords that are moved to $E$ at each step do not necessarily become its Lyndon factors. In general, each Lyndon factor of $E$ may consist of more than one of these subwords. This will be relevant in the next sections when we describe the inverse map.

\begin{example}\label{ex:8}
For $w=\b\a\b\a\c\a\b\c\in\Wo_8$, the following table summarizes the computation of $\Psi(w)=\a\b\c\b\a\b\a\c$.
$$\begin{array}{lcr}
O && E\\ \hline
\b|\a\b\a\c\dashline \a\b\c && - \\
 \b|\a\dashline \b\c &^\cP& \a\b\a\c \\
- &^\cF& \a\b\c\b\a\b\a\c
\end{array}$$
Note that, in the second step, the word $o_2=\a\dashline \b\c$ is not splittable because $s=\b\c\ge \b=o_1$.
\end{example}

\begin{example}\label{ex:31}
For $w=\b\c\c\c\b\b\c\c\b\b\c\c\b\c\b\a\b\a\b\a\a\b\c\a\a\a\b\b\a\a\b\in\Wo_{31}$, the following table shows the steps in the computation of $\Psi(w)=\b\c\c\c\b\b\c\c\b\b\c\c\b\c\b\a\b\a\b\a\a\b\c\a\a\a\b\b\a\a\b$.
$$\begin{array}{lcr}
O && E\\ \hline
\b\b\c\c\b\b\c\c\c\b\b\c\c\b\c|\b|\a\a\b\a\a\b\c|\a\a\b|\a\dashline \a\a\b\b && - \\
\b\b\c\c\b\b\c\c\c\b\b\c\c\b\c|\b|\a\a\b\dashline \a\a\b\c\ &^\cF& \a\a\a\b\b\a\a\b \\
\b\b\c\c\b\b\c\c\c\b\b\c\c\b\c|\b|\a\dashline \a\b\ &^\cS& \a\a\b\c\a\a\a\b\b\a\a\b \\
\b\b\c\c\b\b\c\c\c\b\b\c\c\b\c|\b|\a\ &^\cS& \a\b\a\a\b\c\a\a\a\b\b\a\a\b \\
\b\b\c\c\b\b\c\c\c\dashline \b\b\c\c\b\c &^\cF& \a\b\a\b\a\a\b\c\a\a\a\b\b\a\a\b \\
\b\b\c\c\dashline \b\b\c\c\c  &^\cS& \b\b\c\c\b\c\a\b\a\b\a\a\b\c\a\a\a\b\b\a\a\b \\
\b\dashline \b\c\c\c  &^\cP& \b\b\c\c\b\b\c\c\b\c\a\b\a\b\a\a\b\c\a\a\a\b\b\a\a\b \\
\b  &^\cS& \b\c\c\c\b\b\c\c\b\b\c\c\b\c\a\b\a\b\a\a\b\c\a\a\a\b\b\a\a\b \\
- && \b\c\c\c|\b\b\c\c\b\b\c\c\b\c|\b|\a\b|\a\b|\a\a\b\c|\a\a\a\b\b\a\a\b
\end{array}$$
\end{example}

\begin{example}\label{ex:17}
For $w=\d\d\e\c\e\d\b\d\b\d\c\c\d\a\b\d\a\in\Wo_{17}$, we compute $\Psi(w)=\d\e\d\c\c\e\d\c\d\b\d\b\d\a\a\b\d$ as follows.
$$\begin{array}{lcr}
O && E\\ \hline
\d\d\e|\c\e\d|\b\d\b\d\c\c\d|\a\b\d|\a && - \\
\d\d\e|\c\e\d|\b\d\dashline\b\d\c\c\d &^\cF& \a\a\b\d \\
\d\d\e|\c\e\d|\b\d\dashline\c\c\d &^\cP& \b\d\a\a\b\d \\
\d\d\e|\c\e\d|\c\dashline\c\d &^\cP& \b\d\b\d\a\a\b\d \\
\d\d\e|\c\e\d|\c &^\cS& \c\d\b\d\b\d\a\a\b\d \\
\d\dashline\d\e &^\cF& \c\c\e\d\c\d\b\d\b\d\a\a\b\d \\
\d &^\cS& \d\e\c\c\e\d\c\d\b\d\b\d\a\a\b\d \\
- && \d\e|\d|\c\c\e\d\c\d|\b\d|\b\d|\a\a\b\d
\end{array}$$
\end{example}

\subsection{The inverse map $\Psinv$}\label{sec:psinv}

In order to describe the inverse map $\Psinv:\We_n\to\Wo_n$, let us first introduce the notion of iterated standard factorization.

\begin{definition}\label{def:iterated} Let $\ell\in\L$ with $|\ell|\ge2$, and let $u\in\W\cup\{\infty\}$.
The {\em iterated standard factorization (ISF) of $\ell$ with respect to $u$} is the unique factorization
$$\ell=r_js_js_{j-1} \dots s_1,$$
for some $j\ge1$, defined as follows: 
\begin{enumerate}[label=(\alph*)]
\item for every $i\in[j]$, the word $s_i$ is the smallest proper suffix of $r_js_js_{j-1} \dots s_i$;
\item for every $i\in[j-1]$, the word $s_i$ has even length and it satisfies $s_i<u$;
\item the word $s_j$ has odd length or it satisfies $u\le s_{j}$.
\end{enumerate}
\end{definition}

The ISF can be found by starting from $\ell$ and repeatedly (for $i=1,2,\dots$) removing the 
smallest proper suffix  $s_i$ of the remaining word, until this suffix no longer satisfies condition (b), in which case it satisfies condition~(c). 
Recall that the smallest proper suffix is the one given by the standard factorization.
After removing each $s_i$, the remaining word belongs to $\L$ by Lemma~\ref{lem:standard}.
If $u=\infty$, this process stops when it finds a suffix $s_j$ of odd length. We will write $\ell=r_j\dashline s_j\dashline  s_{j-1}\dashline  \dots\dashline  s_1$ to indicate that this is an ISF.

\begin{example}\label{ex:adbccc}
The ISF of $\ell=\a\d\b\c\c\c$ with respect to $u=\c\c\d$ is $r_2\dashline s_2\dashline s_1 =\a\dashline\d\dashline\b\c\c\c$. Indeed, the smallest proper suffix of $\ell$ is $s_1=\b\c\c\c$, which has even length and satisfies $s_1<u$. After removing this suffix, the remaining word $\a\d$ has smallest proper suffix $s_2=\d$, which has odd length and satisfies $u\le s_2$.
\end{example}

\begin{example}\label{ex:ccedcd}
The ISF of $\ell=\c\c\e\d\c\d$ with respect to $u=\d\d\e$ is $r_2\dashline s_2\dashline s_1
=\c\dashline\c\e\d\dashline\c\d$. Now the smallest proper suffix of $\ell$ is $s_1=\c\d$, which has even length and satisfies $s_1<u$. The smallest proper suffix of $\c\c\e\d$ is $s_2=\c\e\d$, which has odd length, even though $s_2<u$.
\end{example}

\begin{example}\label{ex:adcdbcdcbcbc}
The ISF of $\ell=\a\d\c\d\b\c\d\c\b\c\b\c$ with respect to $u=\c$ is 
$\ell=r_4\dashline s_4\dashline s_3\dashline s_2\dashline s_1=\a\d\dashline\c\d\dashline\b\c\d\c\dashline\b\c\dashline\b\c$. Note that the word $s_4=\c\d$ has even length but it satisfies $u\le s_4$, so the factorization stops here. The ISF of $\ell$ with respect to $\infty$ is $\ell=r_5\dashline s_5\dashline s_4\dashline s_3\dashline s_2\dashline s_1=\a\dashline\d\dashline\c\d\dashline\b\c\d\c\dashline\b\c\dashline\b\c$.
\end{example}

We are now ready to define the map $\Psinv$.  

\begin{definition}[The map $\Psinv$]\label{def:psinv}
On input $w'\in\We_n$, initially set $(O',E')=(-,w')$. If $n$ is odd, remove the odd factor from $E'$ (which has length one) and put it in $O'$.
Iterate the following step as long as $E'$ is not empty:
\begin{itemize}
\item[$(\psinv)$] Let $O'=o'_1|o'_2|\dots|o'_h$ and $E'=e'_1|e'_2|\dots|e'_k$ be their Lyndon factorizations. If $O'$ is empty, let $o'_h=\infty$ by convention. Update $(O',E')$ to 
$$(O,E)=(O'e'_1,\,e'_2\dots e'_k) \hspace{46mm}\text{if $o'_h<e'_1$}; \hfill\quad\cSi$$
otherwise, let $e'_1=r_j\dashline s_j\dashline s_{j-1}\dashline \dots\dashline  s_1$ be the ISF of $e'_1$ with respect to $o'_h$ (as in Definition~\ref{def:iterated}), and update $(O',E')$ to 
$$(O,E)=\begin{cases}
(o'_1o'_2\dots o'_{h-1}r_js_jo'_h,\,s_{j-1}\dots s_1e'_2\dots e'_k) & \text{if $o'_h\le s_j$}, \hfill \quad\cPi\\
(O's_jr_j,\hfill s_{j-1}\dots s_1e'_2\dots e'_k) & \text{if $s_j<o'_h$
}. \hfill \quad\cFi
\end{cases}$$
\end{itemize}

Once $E'$ is empty, let $\Psinv(w')=O'$.
\end{definition}

We will see later that, at any point in the computation of $\Psinv$, the Lyndon factors of $O'$ are odd and distinct, and the Lyndon factors of $E'$ are even. In particular, $o'_h\neq e'_1$, so we have $e'_1<o'_h$ in cases~\cPi\ and~\cFi.
Let us give some examples of the map $\Psinv$.

\begin{example} Let $w'=\c\d\c\d\a\d\b\c\c\c\in\We_{10}$. The following table summarizes the computation of $\Psinv(w')=\d\a\d\c\c\d\b\c\c\c$, which reverses the steps from Example~\ref{ex:10}. When $e'_1<o'_h$, the dashed vertical bars indicate the ISF of $e'_1$ with respect to $o'_h$. For the third step, the ISF of $\a\d\b\c\c\c$ with respect to $\c\c\d$ was given in Example~\ref{ex:adbccc}.
$$\begin{array}{lcr}
  O' && E'\\ \hline
  -&& \c\dashline\d|\c\d|\a\d\b\c\c\c \\
  \d|\c &^\cFi& \c\d|\a\d\b\c\c\c \\
  \d|\c\c\d &^\cSi& \a\dashline\d\dashline\b\c\c\c \\
  \d|\a\d\c\c\d &^\cPi& \b\c\c\c \\
 \d|\a\d\c\c\d\b\c\c\c &^\cSi& -
\end{array}$$
\end{example}

\begin{example}
For $w'=\d\e\d\c\c\e\d\c\d\b\d\b\d\a\a\b\d\in\We_{17}$, here is the computation of $\Psinv(w')=\d\d\e\c\e\d\b\d\b\d\c\c\d\a\b\d\a$, which reverses the steps from Example~\ref{ex:17}.
 For the third step, the ISF of $\c\c\e\d\c\d$ with respect to $\d\d\e$ was given in Example~\ref{ex:ccedcd}.
$$\begin{array}{lcr}
O' && E'\\ \hline
- && \d\e|\d|\c\c\e\d\c\d|\b\d|\b\d|\a\a\b\d \\
\d && \d\e|\c\c\e\d\c\d|\b\d|\b\d|\a\a\b\d \\
\d\d\e &^\cSi& \c\dashline\c\e\d\dashline\c\d|\b\d|\b\d|\a\a\b\d \\
\d\d\e|\c\e\d|\c &^\cFi& \c\d|\b\d|\b\d|\a\a\b\d \\
\d\d\e|\c\e\d|\c\c\d &^\cSi& \b\dashline\d|\b\d|\a\a\b\d \\
\d\d\e|\c\e\d|\b\d\c\c\d &^\cPi& \b\dashline\d|\a\a\b\d \\
\d\d\e|\c\e\d|\b\d\b\d\c\c\d &^\cPi& \a\dashline\a\b\d \\
\d\d\e|\c\e\d|\b\d\b\d\c\c\d|\a\b\d|\a &^\cFi& - 
\end{array}$$
\end{example}

\begin{example}
For $w'=\c\a\d\c\d\b\c\d\c\b\c\b\c\in\We_{13}$, the computation of $\Psinv(w')=\a\d\c\d\c\b\c\d\c\b\c\b\c$ is given below.
 For the second step, the ISF of $\a\d\c\d\b\c\d\c\b\c\b\c$ with respect to $\c$ was given in Example~\ref{ex:adcdbcdcbcbc}.
$$\begin{array}{lcr}
O' && E'\\ \hline
- && \c|\a\d\c\d\b\c\d\c\b\c\b\c \\
\c && \a\d\dashline\c\d\dashline\b\c\d\c\dashline\b\c\dashline\b\c \\
\a\d\c\d\c &^\cPi& \b\c\d\c|\b\c|\b\c \\
\a\d\c\d\c\b\c\d\c &^\cSi& \b\c|\b\c \\
\a\d\c\d\c\b\c\d\c\b\c &^\cSi& \b\c \\
\a\d\c\d\c\b\c\d\c\b\c\b\c &^\cSi& -
\end{array}$$
\end{example}

Our goal is to show that $\Psi$ is a map from $\Wo_n$ to $\We_n$, that $\Psinv$ is a map from $\We_n$ to $\Wo_n$, and that they are inverses of each other, and hence both are bijections.
More specifically, we will show that each step of type \cS, \cP\ and \cF\ in Definition~\ref{def:psi} is reversed by a step of type \cSi, \cPi\ and \cFi\ in Definition~\ref{def:psinv}, respectively.
Before that, we state in the next subsection a few simple facts about Lyndon words that will be used in the proofs.

\subsection{Auxiliary results about Lyndon words}\label{sec:auxiliary}

\begin{lemma}[{\cite[Prop.~5.1.4]{Lothaire}}]\label{lem:concatenate} Let $\ell$ with $|\ell|\ge2$ and standard factorization $\ell=r\dashline s$. 
Then, for any $t\in\L$ such that $\ell<t$, the expression $\ell t$ is a standard factorization if and only if $t\le s$.
\end{lemma}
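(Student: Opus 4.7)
My plan is to start by observing that, since $\ell, t \in \L$ and $\ell < t$, Lemma~\ref{lem:standard} already yields $\ell t \in \L$, so $\ell t$ has a standard factorization. The question becomes whether that factorization is $\ell \dashline t$, equivalently, whether $t$ is the lexicographically smallest proper suffix of $\ell t$. I would partition the proper suffixes of $\ell t$ into three families: proper suffixes of $t$ (all strictly larger than $t$, since $t \in \L$), $t$ itself, and words $\ell'' t$ where $\ell''$ ranges over the nonempty proper suffixes of $\ell$. The entire statement then reduces to understanding when $t \le \ell'' t$.

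The technical core is an auxiliary biconditional that I would prove first: \emph{for any nonempty word $u$ and any Lyndon word $t$, $t \le u$ if and only if $t \le ut$.} I would prove this by a short case analysis on the first position where $t$ and $u$ disagree. If that position lies within both words, the same deciding character controls both $t$ vs $u$ and $t$ vs $ut$. If $t$ is a proper prefix of $u$, then both $t < u$ and $t < ut$ hold, since $t$ is a proper prefix of $ut$ as well. The delicate case is when $u$ is a proper prefix of $t$: automatically $u < t$, so I must also show $ut < t$; writing $t = ut'$ with $t'$ a proper suffix of $t$, the Lyndon property $t < t'$ implies that past the common prefix $u$ the continuation of $ut$ (namely $t$) is lexicographically smaller than the continuation of $t$ (namely $t'$), giving $ut < t$. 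This sub-case is the one spot where the Lyndon hypothesis on $t$ is essential, and I expect it to be the main obstacle.

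With the auxiliary claim in hand, the lemma will follow in two short steps. For the forward direction, if $\ell \dashline t$ is the standard factorization then $t$ is the smallest proper suffix of $\ell t$; in particular $t \le st$ (as $st$ is one such proper suffix), so the claim yields $t \le s$. For the reverse direction, I would assume $t \le s$ and note that, since $s$ is the smallest proper suffix of $\ell$, every nonempty proper suffix $\ell''$ of $\ell$ satisfies $\ell'' \ge s \ge t$; applying the claim with $u = \ell''$ then gives $t \le \ell'' t$. Combined with the fact that $t$ is strictly smaller than its own proper suffixes, this shows that $t$ is the smallest proper suffix of $\ell t$, so its standard factorization is indeed $\ell \dashline t$, as required.
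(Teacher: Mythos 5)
The paper does not prove this lemma; it is quoted directly from Lothaire (Prop.~5.1.4) and used as a black box, so there is no in-paper proof to compare against. Your argument is correct and self-contained. The reduction to the ``smallest proper suffix'' characterization of the standard factorization is exactly the one the paper endorses, the partition of the proper suffixes of $\ell t$ into suffixes of $t$, the word $t$ itself, and words $\ell'' t$ is exhaustive, and the auxiliary biconditional ($t\le u$ if and only if $t\le ut$, for $t$ Lyndon and $u$ nonempty) is proved correctly: the only delicate case, where $u$ is a proper prefix of $t=ut'$, is handled properly by invoking $t<t'$ and stripping the common prefix $u$. Both directions of the lemma then follow as you describe, with the forward direction using the suffix $st$ and the reverse direction using that every nonempty proper suffix of $\ell$ is at least $s\ge t$.
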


\begin{lemma}\label{lem:bigLyndon}
If $\ell\in\L$ and $u,v\in\W$ satisfy $u<\ell$ and $v\le \ell$, then $uv<\ell$.
\end{lemma}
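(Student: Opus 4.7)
The plan is to do a straightforward case analysis based on how the strict inequality $u < \ell$ is witnessed lexicographically. There are exactly two ways this can happen: either $u$ and $\ell$ first disagree at some position $i \le \min(|u|, |\ell|)$ with $u_i < \ell_i$, or $u$ is a proper prefix of $\ell$. In the first situation, the comparison of $uv$ with $\ell$ is already decided at position $i$ regardless of what $v$ is, so $uv < \ell$ immediately.

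The substantive case is when $u$ is a proper prefix of $\ell$. I would write $\ell = u\ell'$ with $\ell'$ a nonempty proper suffix of $\ell$, and reduce the desired inequality $uv < u\ell'$ to $v < \ell'$. Here is where the Lyndon hypothesis enters: by the characterization recalled in the paper (a primitive word is Lyndon iff it is strictly smaller than each of its proper suffixes), we have $\ell < \ell'$. Combining this with the assumption $v \le \ell$ gives $v \le \ell < \ell'$, hence $v < \ell'$ and therefore $uv < \ell$.

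There is no real obstacle here; the lemma is essentially a one-line consequence of the defining property of Lyndon words once the prefix/non-prefix dichotomy is isolated. The only thing to be careful about is handling the weak inequality $v \le \ell$ correctly: if $v = \ell$, one still gets strict inequality $v < \ell'$ from the strict inequality $\ell < \ell'$, which is why it is important that the Lyndon condition yields a \emph{strict} comparison with proper suffixes. I would state the proof in these two short cases without further ado.
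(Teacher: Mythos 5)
Your proof is correct and follows essentially the same route as the paper: split on whether $u$ is a prefix of $\ell$, and in the prefix case write $\ell = us$ and use that a Lyndon word is strictly smaller than its proper suffixes to get $v \le \ell < s$, hence $uv < us = \ell$. Nothing to add.
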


\begin{proof}
If $u$ is not a prefix of $\ell$, then $u<\ell$ implies $uv<\ell$ by the properties of the lexicographic order. If $u$ is a prefix of $\ell$, we can write $\ell=us$ for some $s$ satisfying $\ell<s$, since $\ell\in\L$. Thus, $v\le \ell<s$, and so $uv<us=\ell$.
\end{proof}

It is shown in~\cite[Prop.~5.1.6]{Lothaire} that, in a Lyndon factorization $w=\ell_1|\ell_2|\dots|\ell_m$, the rightmost factor $\ell_m$ is the (lexicographically) smallest suffix of $w$, and so the Lyndon factors can be found by repeatedly removing the smallest suffix of the remaining word. The following lemma uses this idea to describe the Lyndon factors of $w$ in terms of the relative order of its suffixes (see also~\cite{HR} for a related approach). For $w=\w_1\w_2\dots \w_n$, we use the notation $w_{[p,q]}=\w_p\w_{p+1}\dots\w_q$ when $p\le q$.

\begin{lemma}\label{lem:LRmin}
Let $w\in\W_n$. The starting positions of the Lyndon factors of $w$ are the left-to-right minima of the sequence of suffixes
\begin{equation}\label{eq:suffixes} w_{[1,n]},w_{[2,n]},\dots,w_{[n,n]},\end{equation} 
that is, the indices $q$ such that $w_{[q,n]}<w_{[p,n]}$ for all $p<q$.
\end{lemma}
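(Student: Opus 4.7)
The plan is to prove the lemma by induction on $n=|w|$, with the key input being the fact recalled just before the statement: the rightmost Lyndon factor $\ell_m=w_{[q_m,n]}$ is the smallest suffix of $w$. Since any two suffixes of $w$ have distinct lengths and are therefore distinct as strings, this smallest suffix is in fact \emph{strictly} smaller than every other suffix.

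Two consequences fall out immediately. First, $q_m$ is an LR minimum, because every earlier suffix is strictly larger than $w_{[q_m,n]}$. Second, no position $p>q_m$ is an LR minimum, since $w_{[p,n]}>w_{[q_m,n]}$ and $q_m<p$. The substantive step is to show, for $p<q_m$, that $p$ is an LR minimum of the suffix sequence of $w$ if and only if $p$ is an LR minimum of the suffix sequence of $w':=w_{[1,q_m-1]}=\ell_1\ell_2\dots\ell_{m-1}$. I would fix $p''<p<q_m$, set $u=w_{[p,q_m-1]}$ and $u'=w_{[p'',q_m-1]}$, so that $w_{[p,n]}=u\ell_m$ and $w_{[p'',n]}=u'\ell_m$, and argue that $u\ell_m<u'\ell_m$ if and only if $u<u'$.

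If $u$ is not a prefix of $u'$, the two words first differ within the initial $|u|$ positions, and appending $\ell_m$ to both does not affect the comparison. If $u$ is a proper prefix of $u'$, write $u'=uz$ where $z=w_{[p''+|u|,q_m-1]}$ is nonempty; then $u<u'$, and comparing $u\ell_m$ with $u'\ell_m=u\cdot w_{[p''+|u|,n]}$ reduces to comparing $\ell_m$ with the suffix $w_{[p''+|u|,n]}$. Since $p''+|u|<q_m$, this suffix is different from $\ell_m$, and the strict minimality of $\ell_m$ forces $\ell_m<w_{[p''+|u|,n]}$, so $u\ell_m<u'\ell_m$, again matching $u<u'$.

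Once this reduction is in place, the induction hypothesis applied to $w'$ (whose Lyndon factorization is $\ell_1|\ell_2|\dots|\ell_{m-1}$ by uniqueness) identifies the LR minima of its suffix sequence as exactly $q_1,\dots,q_{m-1}$; combining this with the two observations about $q_m$ and the positions beyond it yields that the LR minima of the suffix sequence of $w$ are exactly $q_1,\dots,q_m$. The only real obstacle is the proper-prefix case in the reduction step: naively comparing $u\ell_m$ with $u'\ell_m$ when $u$ is a prefix of $u'$ can flip the inequality, and it is precisely the global minimality of $\ell_m$ among suffixes of $w$ that prevents this.
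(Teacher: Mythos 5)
Your proof is correct, but it proceeds in the opposite direction from the paper's. The paper takes the left-to-right minima $q_1<\dots<q_m$ as the starting point, defines candidate factors $\ell_i=w_{[q_i,q_{i+1}-1]}$, and verifies directly that these form a valid Lyndon factorization: each $\ell_i$ is Lyndon because a smaller proper suffix of $\ell_i$ would produce a left-to-right minimum strictly between $q_i$ and $q_{i+1}$, and $\ell_i\ge\ell_{i+1}$ because otherwise Lemma~\ref{lem:bigLyndon} would give $w_{[q_i,n]}<w_{[q_{i+1},n]}$, contradicting that $q_{i+1}$ is a minimum; uniqueness of the Lyndon factorization then finishes the argument with no induction. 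You instead start from the known factorization, peel off the rightmost factor $\ell_m$ using the fact (quoted in the paper from Lothaire, Prop.~5.1.6, but not actually invoked in its proof) that $\ell_m$ is the strictly smallest suffix, and induct on the length after showing that appending $\ell_m$ to the suffixes of $w_{[1,q_m-1]}$ does not disturb their relative order. Your identification of the proper-prefix case as the only danger, and your use of the strict minimality of $\ell_m$ among all suffixes to resolve it, is exactly right and is arguably spelled out more carefully than the analogous prefix subtleties in the paper's two contradiction arguments. The paper's route is shorter and avoids induction; yours makes the ``repeatedly remove the smallest suffix'' mechanism explicit, which is closer in spirit to how the lemma is later used. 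Both are complete proofs.
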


\begin{proof}
Let $1=q_1<q_2<\dots<q_m$ be the left-to-right minima of the sequence~\eqref{eq:suffixes}, and use the convention $q_{m+1}=n+1$. 
The factors of $w$ that start at these left-to-right minima are $\ell_i=w_{[q_i,q_{i+1}-1]}$, for $i\in[m]$. We want to show that $w=\ell_1|\ell_2|\dots|\ell_m$ is the Lyndon factorization of~$w$.

To prove that $\ell_i\in\L$, suppose for contradiction that $\ell_i$ had a smaller suffix, say $w_{[q,q_{i+1}-1]}<\ell_i$ for some $q_i<q<q_{i+1}$. Then $w_{[q,n]}<w_{[q_i,n]}$, contradicting that the sequence~\eqref{eq:suffixes} has no left-to-right minima strictly between $q_i$ and $q_{i+1}$.

To prove that $\ell_i\ge\ell_{i+1}$ for all $i\in[m-1]$, suppose for contradiction that $\ell_i<\ell_{i+1}$. Then $\ell_i\ell_{i+1}<\ell_{i+1}$ by Lemma~\ref{lem:bigLyndon}, and so $w_{[q_i,n]}=\ell_i\ell_{i+1}\dots\ell_m<\ell_{i+1}\dots\ell_m=w_{[q_{i+1},n]}$, contradicting that $q_{i+1}$ is a left-to-right minimum.
\end{proof}

\begin{lemma}\label{lem:iterated}
Let $\ell=r_j\dashline s_j\dashline s_{j-1}\dashline  \dots \dashline s_1$ be the ISF of $\ell$ with respect to any $u$, as in Definition~\ref{def:iterated}. Then  $$r_j<\ell<s_1\le s_2\le\dots\le s_{j-1}\le s_j.$$
\end{lemma}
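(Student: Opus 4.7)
The plan is to unwind the iterated standard factorization into a telescoping chain of ordinary standard factorizations, then apply Lemma~\ref{lem:standard} and a short contradiction argument for the monotonicity of the $s_i$.

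First I will introduce the auxiliary notation $\ell_i := r_j s_j s_{j-1} \cdots s_i$ for $i \in [j]$, together with $\ell_{j+1} := r_j$, so that $\ell_1 = \ell$. Condition~(a) of Definition~\ref{def:iterated} says that $s_i$ is the smallest proper suffix of $\ell_i$, which (by the equivalent characterization of the standard factorization noted right after Lemma~\ref{lem:standard}) means $\ell_i = \ell_{i+1}\dashline s_i$ is the standard factorization of $\ell_i$ for every $i \in [j]$. An easy downward induction, starting from $\ell_1 = \ell \in \L$, together with Lemma~\ref{lem:standard}, then shows that every $\ell_i$ (for $i \in [j+1]$) and every $s_i$ lies in $\L$, and that $\ell_{i+1} < \ell_i < s_i$ for each $i \in [j]$.

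Chaining these inequalities immediately yields the two outer bounds of the lemma: $r_j = \ell_{j+1} < \ell_j < \cdots < \ell_1 = \ell$ gives $r_j < \ell$, while $\ell = \ell_1 < s_1$ gives $\ell < s_1$.

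For the monotonicity $s_i \le s_{i+1}$ with $i \in [j-1]$, I will argue by contradiction: suppose $s_{i+1} < s_i$. Since both $s_{i+1}$ and $s_i$ belong to $\L$, the first part of Lemma~\ref{lem:standard} gives $s_{i+1}s_i \in \L$. Expanding $\ell_i = \ell_{i+1}s_i = \ell_{i+2}s_{i+1}s_i$ (which is legitimate because $i+2 \le j+1$, and $\ell_{i+2}$ is non-empty whether $i+2 \le j$ or $i+2 = j+1$), we see that $s_{i+1}s_i$ is a proper suffix of $\ell_i$ in $\L$ that is strictly longer than $s_i$. This contradicts the fact that $s_i$ is the longest proper suffix of $\ell_i$ in $\L$ (the other equivalent form of the standard factorization). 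Hence $s_i \le s_{i+1}$, completing the proof.

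The only real subtlety — and the place I expect to be most careful — is the monotonicity step, where one must keep straight the two equivalent characterizations of the standard factorization (smallest proper suffix versus longest proper suffix in $\L$) and verify that $\ell_{i+2}$ is non-empty so that $s_{i+1}s_i$ is genuinely a proper suffix. Every other piece is a direct invocation of Lemma~\ref{lem:standard}.
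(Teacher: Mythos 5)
Your proof is correct, and its skeleton is the same as the paper's: unwind the ISF into a chain of standard factorizations $\ell_i=\ell_{i+1}\dashline s_i$ and read off $r_j<\ell<s_1$ from Lemma~\ref{lem:standard}. The one genuine difference is the monotonicity step $s_i\le s_{i+1}$: the paper disposes of it by citing Lemma~\ref{lem:concatenate} (each $s_i$ is the suffix of a standard factorization of a word whose own standard factorization has suffix $s_{i+1}$), whereas you reprove the needed direction of that lemma from scratch --- if $s_{i+1}<s_i$ then $s_{i+1}s_i\in\L$ by the first part of Lemma~\ref{lem:standard}, and it would be a proper Lyndon suffix of $\ell_i$ longer than $s_i$, contradicting the ``longest Lyndon proper suffix'' characterization of the standard factorization. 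Your version is more self-contained and makes explicit the small points the paper leaves implicit (that $\ell_{i+2}$ is nonempty, so $s_{i+1}s_i$ really is a proper suffix); the paper's version is shorter because Lemma~\ref{lem:concatenate} is already available as a quoted result from Lothaire. Both are valid.
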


\begin{proof}
Since $s_1$ is the longest proper suffix of $\ell$ that is a Lyndon word, we have $r_j\le r_js_js_{j-1} \dots s_2<\ell<s_1$ by Lemma~\ref{lem:standard}. The fact that $s_1\le s_2\le\dots\le s_{j}$ follows from Lemma~\ref{lem:concatenate}, since each $s_i$ is the suffix of a standard factorization.
\end{proof}

\begin{lemma}\label{lem:no-overlap}
Let $r\in\L$ with $|r|\ge2$ and standard factorization $r=r'\dashline s'$, and let $u$ be a nonempty word such that $u\le s'$ and $ru\in\L$.
Then the smallest proper suffix of $ru$ is contained in $u$.
\end{lemma}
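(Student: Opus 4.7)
The plan is to locate the smallest proper suffix of $ru$ by its starting position. A proper suffix of $ru$ either starts at some position $q\in\{2,\dots,|r|\}$, in which case it has the form $tu$ where $t$ is a non-empty proper suffix of $r$, or starts at some position $q\ge|r|+1$, in which case it is a suffix of $u$ (possibly $u$ itself). The whole task therefore reduces to showing that $u<tu$ for every non-empty proper suffix $t$ of $r$: this will force the lexicographic minimum among proper suffixes of $ru$ to occur at position $\ge|r|+1$, and so to lie inside $u$.

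To set this up, recall from the characterization noted just after Lemma~\ref{lem:standard} that $s'$ is the smallest proper suffix of $r$. Hence every non-empty proper suffix $t$ of $r$ satisfies $s'\le t$, and combining with the hypothesis $u\le s'$ gives $u\le t$.

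It remains to upgrade $u\le t$ to $u<tu$, which is a short case analysis. If $u$ is a prefix of $t$ (including the case $u=t$), then $u$ is a proper prefix of $tu$ because $t$ is non-empty, so $u<tu$. Otherwise $u$ and $t$ first differ at some position $i\le\min(|u|,|t|)$; the possibility that $t$ is a proper prefix of $u$ is excluded by $u\le t$, so we must have $u_i<t_i$. Since $i\le|t|$, also $(tu)_i=t_i$, and comparing $u$ with $tu$ at position $i$ yields $u<tu$. The only genuine ``work'' is this case analysis, which is completely elementary; everything else just unpacks the definition of the standard factorization and the hypothesis $u\le s'$.
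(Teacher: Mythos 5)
Your proof is correct and rests on the same key observation as the paper's: since $s'$ is the lexicographically smallest proper suffix of $r$, the hypothesis $u\le s'$ forces $u<tu$ for any proper suffix of $ru$ of the form $tu$ with $t$ a nonempty proper suffix of $r$, so the minimum must be attained inside $u$. The only difference is presentational: the paper argues by contradiction, pinning the hypothetical minimal overlapping suffix down to $s'u$ and then exhibiting the smaller suffix $u<s'u$, whereas you compare $u$ directly against every overlapping suffix $tu$ --- which is slightly more explicit and spares you from justifying that the minimal overlapping suffix would have to be $s'u$ in particular.
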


\begin{proof}
Let $t$ be the smallest proper suffix of $ru$, and suppose for contradiction that it overlaps with~$r$.
Since the smallest proper suffix of $r$ is $s'$, we must have $t=s'u$. But $u\le s'$ implies $u<s'u=t$, contradicting that $t$ is the smallest proper suffix of $ru$.
\end{proof}

\subsection{Properties of $\Psi$ and $\Psinv$}\label{sec:properties}

The next lemma describes some properties of the pairs of words $(O,E)$ that appear in the computation of $\Psi$. We write $(O,E)\overset{\psi}{\to}(O',E')$ to indicate that $(O',E')$ is obtained by applying one step $(\psi)$ to $(O,E)$, and we use the same notation as in Definition~\ref{def:psi}.

\begin{lemma}\label{lem:O'E'}
Let $(O,E)\overset{\psi}{\to}(O',E')$ be a step in the computation of $\Psi$ on some input $w\in\Wo_n$. Denote the Lyndon factorization of $O'$ by $o'_1|o'_2|\dots|o'_h$, and use the convention $o'_i=\infty$ for $i\le 0$.
\begin{enumerate}[label = (\roman*)]
\item\label{part:factO'} The word $O'$ has odd and distinct Lyndon factors, given, after each type of step, by
$$O'=o'_1|o'_2|\dots|o'_h=
\begin{cases} o_1|o_2|\dots |o_{m-1}|r & \quad\cS,\\
o_1|o_2|\dots|o_{m-1}|s & \quad\cP,\\
o_1|o_2|\dots|o_{m-2} & \quad\cF.
\end{cases}$$
\item\label{part:o-1} 
$E'<o'_{h-1}$.
\item\label{part:F} After a step of type \cF, $E'<o'_h$. 
\item\label{part:P} After a step of type \cP, $E'<o'_h$ and $E<o'_h$. 
\item\label{part:S} After a step of type \cS, $s$ is the leftmost Lyndon factor of~$E'$, and $o'_h<s\le E'$.
\item\label{part:even} All the Lyndon factors of $E'$ are even. Additionally, if $E'=\ell E$, then $\ell$ is contained in the leftmost Lyndon factor of $E'$.
\end{enumerate}
\end{lemma}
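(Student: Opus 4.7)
The plan is to prove all six statements simultaneously by induction on the number of steps of $\Psi$ applied to $w\in\Wo_n$. The base case is immediate: the initial state $(O,E)=(w,-)$ has $O=w$ with odd, distinct Lyndon factors by hypothesis, and all claims about $E$ are vacuous. For the inductive step, I would assume (i)--(vi) hold for $(O,E)$ and verify them for $(O',E')$, splitting into the three step types $\cS$, $\cP$, $\cF$.

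Part (i) is a direct consequence of Lemma~\ref{lem:standard}: the standard factorization $o_m=r\dashline s$ has $r,s\in\L$ with $r<o_m<s$, and splittability forces $s<o_{m-1}$, hence $r<o_{m-1}$; so in $\cS$ and $\cP$ the newly appended factor is a Lyndon word strictly smaller than $o_{m-1}$, making the proposed factorization valid and its factors distinct, and the parity (odd) is forced by $|r|+|s|=|o_m|$ being odd. Case $\cF$ just drops two factors. Parts (ii), (iii) and (iv) are proved by bounding $E'$ against the appropriate factor of $O'$ via Lemma~\ref{lem:bigLyndon}, using the inductive bounds $E<o_{m-1}$ (previous (ii)) and, when the previous step was $\cF$ or $\cP$, $E<o_m$ (previous (iii) or (iv)).

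The heart of the proof is (v). The inequalities $o'_h<s\le E'$ are immediate ($o'_h=r<s$ by Lemma~\ref{lem:standard}, and $s$ is a prefix of $E'=sE$), so the real task is to show $s\ge\ell_1(E)$, which forces $s$ to appear whole as the leftmost Lyndon factor of $E'$. I would split on the previous step type: if it was $\cF$ or $\cP$, the previous (iii) or (iv) gives $E<o_m$, and $o_m<s$ from Lemma~\ref{lem:standard} yields $\ell_1(E)\le E<s$; if it was $\cS$ with parameters $\tilde r,\tilde s$, the previous (v) gives $o_m=\tilde r$ and $\ell_1(E)=\tilde s$, and applying Lemma~\ref{lem:concatenate} to the older standard factorization $\tilde r\dashline\tilde s$ (with $\ell:=\tilde r$ and $t:=\tilde s$) forces $\tilde s\le s$.

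For (vi), once (v) is known, evenness of all Lyndon factors of $E'$ follows from two observations: the prepended word (which is $s$ in $\cS$, $r$ in $\cP$, or the Lyndon word $o_m o_{m-1}$ in $\cF$, with $o_m o_{m-1}\in\L$ by Lemma~\ref{lem:standard}) is always even, and the leftmost Lyndon factor of $E'=\ell E$ ends precisely at a Lyndon-factor boundary of $E$, so the remaining factors are inherited from $E$ and even by induction. Both the boundary-alignment and the containment claim of (vi) follow from Lemma~\ref{lem:LRmin}: for any interior position of $\ell$, the suffix of $E'$ there is larger than the suffix at position $1$ (because $\ell$ is smaller than each of its proper suffixes), and the left-to-right minima of suffixes of $E'$ beyond position $|\ell|$ coincide with those of the corresponding suffixes of $E$. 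The main obstacle is the tight interdependence of the six statements across steps---especially (v) after (v)---so the invariant must be maintained as a single package; additionally, the case ``previous $\cS$, current $\cP$'' in part (iv) is delicate and requires combining the Lyndon condition on $o_m^{\mathrm{old}}=\tilde r\tilde s$ with Lemma~\ref{lem:concatenate} to exclude the problematic case where $\tilde s$ is a strict prefix of the current $s$.
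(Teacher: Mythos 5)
Your overall strategy---a simultaneous induction on the number of steps, using Lemmas \ref{lem:standard}, \ref{lem:concatenate}, \ref{lem:bigLyndon} and \ref{lem:LRmin}---matches the paper's, and your treatment of parts \ref{part:factO'}, \ref{part:o-1}, \ref{part:F}, \ref{part:S} and \ref{part:even} is sound; in fact your proof of \ref{part:S}, which looks back only one step and combines the previous instance of \ref{part:S} with Lemma~\ref{lem:concatenate} to get that the leftmost Lyndon factor $\tilde s$ of $E$ satisfies $\tilde s\le s$, is a clean alternative to the paper's argument there. The gap is in part \ref{part:P}. When the preceding step is of type \cS, your inductive hypotheses only give you the leftmost Lyndon factor $\tilde s$ of $E$ and the inequality $\tilde s\le s$ (hence $\tilde s<s$, by parity). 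This does not imply $E<s$: if $\tilde s$ is a proper prefix of $s$, the comparison of $E=\tilde s E''$ with $s$ is decided by $E''$, over which you have no control at this point. Your proposed repair---that the Lyndon condition on $o_m^{\mathrm{old}}=\tilde r\tilde s$ together with Lemma~\ref{lem:concatenate} excludes the case where $\tilde s$ is a proper prefix of $s$---is false. Take $r=\a\d$, $s=\b\c\c$, $\tilde s=\b\c$: then $\tilde r=rs=\a\d\b\c\c$ is Lyndon with standard factorization $\a\d\dashline\b\c\c$, and $\tilde r\tilde s=\a\d\b\c\c\b\c$ is Lyndon with standard factorization $\a\d\b\c\c\dashline\b\c$ (so $\tilde s\le s$, exactly as Lemma~\ref{lem:concatenate} demands), yet $\tilde s=\b\c$ is a proper prefix of $s=\b\c\c$. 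This configuration genuinely occurs in a run of $\Psi$, e.g.\ on input $w=\d\a\d\b\c\c\b\c\in\Wo_8$, whose first step is of type \cS\ (moving $\b\c$) and whose second step is of type \cP\ with $s=\b\c\c$.

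What actually saves part \ref{part:P} is not a one-step lookback but control over the entire maximal run of \cS\ steps preceding the \cP\ step: backtracking to the last pair $(\hat O,\hat E)$ that did not follow an \cS\ step, one has $\hat E<\hat o_p$ from parts \ref{part:F} and \ref{part:P}, the removed suffixes satisfy $s_1\le\dots\le s_{j-1}<s_j=s$ by Lemma~\ref{lem:iterated}, and $\hat o_p<s_j$ by Lemma~\ref{lem:bigLyndon}; repeated application of Lemma~\ref{lem:bigLyndon} to $E=s_{j-1}\cdots s_1\hat E$ then yields $E<s_j$ even when some $s_i$ is a prefix of $s_j$. Without this (or some equivalent bound on all of $E$, not merely on its first Lyndon factor), your induction does not close, and since your proof of \ref{part:S} invokes \ref{part:P} for the previous step, the whole package is affected.
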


\begin{proof}
We prove part~\ref{part:factO'} by induction on the number of steps taken in the computation of $\Psi$ up to the current step, noting that the input word $w$ has odd and distinct Lyndon factors. Suppose that the Lyndon factorization $O=o_1|o_2|\dots|o_{m}$ consists of distinct odd factors  $o_1>o_2>\dots>o_m$. Then the factors of $O'$ in case~\cF\ are $o_1>o_2>\dots>o_{m-2}$, which are odd and distinct as well. In cases~\cS\ and~\cP, we have $r<s<o_{m-1}$ (by Lemma~\ref{lem:standard} and the fact that $o_m$ is splittable), and so $o_1>o_2>\dots>o_{m-1}>s>r$. This proves that the Lyndon factorizations are as stated in both cases, and that the factors are odd and distinct, since $r$ is odd in case~\cS, and $s$ is odd in case~\cP.

Part~\ref{part:o-1} is also proved by induction on the number of steps taken. First note that the initial pair of words $(w,-)$ trivially satisfies the condition. Suppose now that $E<o_{m-1}$. We want to show that $E'<o'_{h-1}$.

In cases \cS\ and \cP, recall that $r<s<o_{m-1}$. By Lemma~\ref{lem:bigLyndon} applied to $o_{m-1}\in\L$, it follows that $E'=sE<o_{m-1}$ in case \cS, and $E'=rE<o_{m-1}$ in case \cP. In both cases, $o'_{h-1}=o_{m-1}$ by part~\ref{part:factO'}, proving that $E'<o'_{h-1}$.

In case \cF, the inequalities $o_{m}<o_{m-1}$ and $E<o_{m-1}$ imply that $E'=o_{m}o_{m-1}E<o_{m-1}$, again by Lemma~\ref{lem:bigLyndon}. By part~\ref{part:factO'}, $o'_h=o_{m-2}$, and so $E'<o_{m-1}<o_{m-2}=o'_h\le o'_{h-1}$, proving not only part~\ref{part:o-1} in this case, but also part~\ref{part:F}.

To prove part~\ref{part:P}, we again use induction on the number of steps taken, but this time, if $(O,E)$ follows a step of type~\cS, we backtrack to the most recent pair $(\hat{O},\hat{E})$ in the computation of $\Psi$ that did not follow a step of type~\cS. 
We write $$(\hat{O},\hat{E})\overset{\cS^{j-1}}{\longrightarrow}(O,E)\overset{\cP}{\rightarrow}(O',E')$$
to indicate that $(O,E)$ is obtained from $(\hat{O},\hat{E})$ by applying a sequence of $j-1\ge0$ steps of type~\cS, and $(O',E')$ is obtained from $(O,E)$ by a step of type~\cP.
Letting $\hat{O}=\hat{o}_1|\dots|\hat{o}_p$, we have $\hat{E}<\hat{o}_p$. Indeed, this is trivial if $\hat{E}=-$, and otherwise it is a consequence of part~\ref{part:F} or the induction hypothesis on part~\ref{part:P}, depending on whether $(\hat{O},\hat{E})$ followed a step of type~\cF\ or~\cP, respectively.
Since the algorithm applies $j-1$ steps of type~\cS\ followed by a step of type~\cP\ to $(\hat{O},\hat{E})$, the ISF of $\hat{o}_p$ with respect to $\hat{o}_{p-1}$ can be written as $\hat{o}_p=r_{j}\dashline s_{j}\dashline s_{j-1}\dashline \dots\dashline  s_1$, where $s_i$ is even for $i\in[j-1]$, and $s_j$ is odd. Then $$(O,E)=(\hat{o}_1\dots\hat{o}_{p-1}r_js_j,s_{j-1}\dots s_1\hat{E}) \quad\text{and}\quad (O',E')=(\hat{o}_1\dots\hat{o}_{p-1}s_j,r_jE).$$ By Lemma~\ref{lem:iterated}, we have $r_j<s_1\le s_2\le\dots\le s_{j-1}< s_j$, where the last inequality is strict because $s_{j-1}$ and $s_j$ have different length. Lemma~\ref{lem:bigLyndon} now implies that $\hat{o}_p=r_{j}s_{j}s_{j-1}\dots s_1<s_j$, and so  $\hat{E}<\hat{o}_p<s_j$. Applying Lemma~\ref{lem:bigLyndon} again, we deduce that $E=s_{j-1}\dots s_1\hat{E}<s_j$, and that $E'=r_jE<s_j$, which proves part~\ref{part:P} since $o'_h=s_j$.

Next we prove part~\ref{part:S}. After a step of type~\cS, it is clear by part~\ref{part:factO'} that $o'_h=r<s\le sE=E'$. 
To show that $s$ is the leftmost Lyndon factor of $E'$, we again use induction on the number of steps. As in the previous paragraph, 
we backtrack to the most recent pair $(\hat{O},\hat{E})$ that did not follow a step of type~\cS, and write
$$(\hat{O},\hat{E})\overset{\cS^{j-1}}{\longrightarrow}(O,E)\overset{\cS}{\rightarrow}(O',E')$$
for some $j\ge1$. Letting $\hat{O}=\hat{o}_1|\dots|\hat{o}_p$, we have $\hat{E}<\hat{o}_p$ by parts~\ref{part:F} and~\ref{part:P}. 
Since the algorithm applies $j$ steps of type~\cS\ to $(\hat{O},\hat{E})$, we have a factorization $\hat{o}_p=r_{j}\dashline s_{j}\dashline s_{j-1}\dashline \dots \dashline s_1$ where each $s_i$ is the smallest proper suffix of the remaining word, and so 
$\hat{o}_p<s_1\le s_2\le \dots \le s_j$ by Lemma~\ref{lem:concatenate}.
Then $$(O,E)=(\hat{o}_1\dots\hat{o}_{p-1}r_js_j,s_{j-1}\dots s_1\hat{E}) \quad\text{and}\quad (O',E')=(\hat{o}_1\dots\hat{o}_{p-1}r_j,s_jE).$$
Since $\hat{E}<\hat{o}_p<s_1\le\dots\le s_j$, we deduce that $s=s_j$ is the leftmost Lyndon factor of $E'$.

Part~\ref{part:even} can also be proved by induction on the number of steps taken, noting that, initially, the empty word trivially satisfies that all its Lyndon factors are even. By construction, $E'=\ell E$ for some even $\ell\in\L$. Specifically, $\ell=s$ in case~\cS, $\ell=r$ in case~\cP, and $\ell=o_m o_{m-1}$ in case~\cF, which are Lyndon words by Lemma~\ref{lem:standard}. By induction hypothesis, all the Lyndon factors of $E$ are even. 
By Lemma~\ref{lem:LRmin}, the starting positions of the Lyndon factors are the left-to-right minima of the sequence of suffixes.
Since every left-to-right minimum of the sequence of suffixes of $\ell E$, other than the one at the very left, must also be a left-to-right minimum of the sequence of suffixes of $E$ (with the obvious shift by $|\ell|$), we deduce that all the Lyndon factors of $E'$ are even as well, and that $\ell$ is contained in the leftmost Lyndon factor of $E'$.
\end{proof}

The next lemma shows that each step of type \cS, \cP\ and \cF\ in the computation of $\Psi$ is reversed by a step of type \cSi, \cPi\ and \cFi\ in the computation of $\Psinv$, respectively.

\begin{lemma}\label{lem:psinv_psi}
Let $(O,E)\overset{\psi}{\to}(O',E')$ be a step in the computation of $\Psi$ on some input $w\in\Wo_n$. 
Then $(O,E)$ is obtained from $(O',E')$ by applying a step $(\psinv)$ (as in Definition~\ref{def:psinv}).
\end{lemma}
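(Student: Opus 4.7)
The plan is to dispatch the three types of $\psi$-step in turn and show that each is inverted by the correspondingly-primed case of $\psinv$. In every case the scheme is the same: read off the Lyndon factorization of $O'$ (in particular $o'_h$) from part~\ref{part:factO'} of Lemma~\ref{lem:O'E'}; identify the leftmost Lyndon factor $e'_1$ of $E'$; determine which branch of $\psinv$ applies; and verify that its output equals $(O,E)$. Case \cS is immediate: parts~\ref{part:factO'} and~\ref{part:S} of Lemma~\ref{lem:O'E'} give $o'_h=r$ and $e'_1=s$, and since $r<s$ by Lemma~\ref{lem:standard}, we have $o'_h<e'_1$, triggering \cSi, whose output $(O'e'_1,\,e'_2\dots e'_k)=(o_1\dots o_{m-1}rs,\,E)$ equals $(O,E)$.

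Cases \cP and \cF require more care because the word moved from $O$ to $E$ (denote it by $\ell$: so $\ell=r$ in case \cP and $\ell=o_m o_{m-1}$ in case \cF) need not itself be a Lyndon factor of $E'$; it may fuse with an initial stretch of the Lyndon factorization of $E$ into a larger Lyndon word. Writing $E=e_1|\dots|e_k$ for the Lyndon factorization of $E$, I first establish a structural claim: there is a unique $p\ge 0$ such that $e'_1=\ell e_1\cdots e_p$ and $e'_i=e_{p+i-1}$ for $i\ge 2$. This can be deduced from Lemma~\ref{lem:LRmin} (Lyndon factors as left-to-right minima of suffixes) combined with part~\ref{part:even} of Lemma~\ref{lem:O'E'}, which guarantees that $\ell$ lies in the leftmost Lyndon factor of $E'$, so the merging can happen only to the right of $\ell$.

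The heart of the argument is then to compute the ISF of $e'_1=\ell e_1\cdots e_p$ with respect to $o'_h$. Iterating Lemma~\ref{lem:no-overlap}, the first $p$ suffixes peeled off are $s_1=e_p,\,s_2=e_{p-1},\,\dots,\,s_p=e_1$, each of even length (by part~\ref{part:even} of Lemma~\ref{lem:O'E'} applied to the state before the $\psi$-step) and strictly smaller than $o'_h$ (using parts~\ref{part:o-1}, \ref{part:F}, and~\ref{part:P} of Lemma~\ref{lem:O'E'} together with the monotonicity $e_1\ge\dots\ge e_p$). What remains is $\ell$, whose smallest proper suffix $s_{p+1}$ is: in case \cP, the suffix $s_j$ of the standard factorization $r=r_j\dashline s_j$; in case \cF, the word $o_{m-1}$, since $\ell=o_m o_{m-1}$ has standard factorization $o_m\dashline o_{m-1}$ (using $o_m<o_{m-1}$). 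The ISF halts here: in case \cF because $o_{m-1}<o_{m-2}=o'_h$ (with $o_{m-2}=\infty$ if $m\le 2$), so \cFi applies; in case \cP either $s_j$ is odd, or $s_j$ is even and $o'_h=s\le s_j$, which I will deduce from $s$ being the smallest proper suffix of $o_m=rs$ (so in particular $s\le s_j s$). Executing \cPi or \cFi respectively then reconstructs $(O,E)$ by direct computation.

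The main obstacle will be rigorously establishing the structural claim on $e'_1$, carrying out the iterated application of Lemma~\ref{lem:no-overlap} with its hypotheses properly tracked at each stage, and verifying the inequality $o'_h\le s_j$ in case \cP when $s_j$ happens to be even, a delicate comparison between $s$ and the suffix of the standard factorization of~$r$.
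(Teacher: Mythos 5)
Your proposal follows essentially the same route as the paper's proof: case \cS\ is handled exactly as in the paper via Lemma~\ref{lem:O'E'}\ref{part:S}; in cases \cP\ and \cF\ you write $e'_1=\ell v$ with $v$ a prefix of $E$ (the paper's starting point too), peel suffixes via Lemma~\ref{lem:no-overlap}, argue evenness of the peeled pieces, and identify the halting point of the ISF with the standard factorization of $\ell$. Two of your choices are mild variants that both work: you make $v$ explicitly a concatenation of complete Lyndon factors $e_1\cdots e_p$ and identify $s_i=e_{p-i+1}$ (the paper only argues the $s_i$ are even via left-to-right minima of suffixes of prefixes of $E$; your version is arguably cleaner once the structural claim is established from Lemma~\ref{lem:LRmin} and Lemma~\ref{lem:O'E'}\ref{part:even}), and in case \cP\ you derive $s\le s_j$ from $s\le s_js$ rather than quoting Lemma~\ref{lem:concatenate} applied to the standard factorization $o_m=r\dashline s$; both give the same inequality, and note that your derivation works regardless of the parity of $s_j$, which you need --- if $s_j$ were odd with $s_j<o'_h$ the algorithm would take branch \cFi, so the disjunction ``either $s_j$ is odd, or $s_j$ is even and $o'_h\le s_j$'' should simply be the unconditional claim $o'_h\le s_j$.

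There is one genuine flaw: in case \cF\ you assert that $o_mo_{m-1}$ has standard factorization $o_m\dashline o_{m-1}$ ``using $o_m<o_{m-1}$''. That hypothesis only gives $o_mo_{m-1}\in\L$ (Lemma~\ref{lem:standard}); it does not make $o_{m-1}$ the smallest proper suffix. For instance, $\a\b\b<\b\c$ are both Lyndon, yet the standard factorization of $\a\b\b\b\c$ is $\a\dashline\b\b\b\c$, not $\a\b\b\dashline\b\c$. What saves you is precisely the reason step \cF\ was taken: $o_m$ is not splittable, so either $|o_m|=1$ or its standard factorization $o_m=r\dashline s$ satisfies $o_{m-1}\le s$, and then Lemma~\ref{lem:concatenate} yields that $o_m\dashline o_{m-1}$ is standard. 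Without this, the application of Lemma~\ref{lem:no-overlap} with suffix $o_{m-1}$ and the identification $s_j=o_{m-1}$, $r_j=o_m$ are unjustified, so you must add this argument (it is exactly what the paper does).
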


\begin{proof}
Let $e'_1$ be the leftmost Lyndon factor of $E'$, which is even by Lemma~\ref{lem:O'E'}\ref{part:even}. Suppose first that the step $(O,E)\overset{\psi}{\to}(O',E')$ was of type~\cS, so $(O',E')=(o_1o_2\dots o_{m-1}r,\,sE)$. In this case,
$r=o'_h<e'_1=s$ by Lemma~\ref{lem:O'E'}\ref{part:S}, and so Definition~\ref{def:psinv} applies a step of type \cSi\ to $(O',E')$, producing the pair $(o_1o_2\dots o_{m-1}rs,\,E)=(O,E)$.

If the step $(O,E)\overset{\psi}{\to}(O',E')$ was of type~\cP\ or~\cF, we know by Lemma~\ref{lem:O'E'}\ref{part:F}\ref{part:P} that $e'_1\le E'<o'_h$.
When $(O',E')$ satisfies this condition, Definition~\ref{def:psinv} computes the ISF $e'_1=r_j\dashline s_j\dashline s_{j-1}\dashline \dots \dashline s_1$ with respect to $o'_h$, and compares $o'_h$ with $s_j$ to determine whether to apply a step of type \cPi\ or \cFi.
\smallskip

Suppose first that the step $(O,E)\overset{\psi}{\to}(O',E')$ was of type \cP, so
$(O',E')=(o_1o_2\dots o_{m-1}s,rE)$, and $E<o'_h=s$ by Lemma~\ref{lem:O'E'}\ref{part:factO'}\ref{part:P}. By Lemma~\ref{lem:O'E'}\ref{part:even}, $r$ is contained in $e'_1$, so we can write $e'_1=rv$ for some (possibly empty) word $v$, which is a prefix of $E$, and so $v\le E<s$. 
Letting $r=r'\dashline s'$ be the standard factorization of $r$, we have $s\le s'$ by Lemma~\ref{lem:concatenate}, since $r\dashline s$ is a standard factorization. 

Recall that the factors  $s_i$ in the ISF of $e'_1=rv$ with respect to $o'_h=s$ are obtained by repeatedly removing the smallest proper suffix. As long as a non-empty portion of $v$, say $u_i$, remains in this process, Lemma~\ref{lem:no-overlap} (using the fact that $u_i\le v<s'$) implies that the smallest proper suffix $s_i$ of $ru_i$ must be contained in $u_i$.
In this case, since all the Lyndon factors of $E$ are even by Lemma~\ref{lem:O'E'}\ref{part:even}, all the left-to-right minima of its sequence of suffixes must have odd indices by Lemma~\ref{lem:LRmin}, and hence the same is true for the left-to-right minima of the sequence of suffixes of any prefix $u_i$ of $E$. It follows that $s_i$ has even length, and since $s_i\le u_i\le v<s$ (otherwise $s_i$ would not have been the smallest proper suffix), the process of computing the ISF of $e'_1$ with respect to $s$ continues until all of $v$ has been removed. 

Let $i\ge0$ be the largest index such that $s_i$ is contained in $v$ as in the above paragraph, so that $v=s_is_{i-1}\dots s_1$.
Since the standard factorization $r=r'\dashline s'$ satisfies $s\le s'$, the computation of the ISF of $e'_1$ with respect to $s$ stops here, that is, we have $j=i+1$, $r_j=r'$ and $s_j=s'$. At this point, since $o'_h=s\le s'=s_j$, Definition~\ref{def:psinv} applies a step of type~\cPi\ to $(O',E')$, recovering $(o_1o_2\dots o_{m-1}rs,E)=(O,E)$.
\smallskip

Finally, suppose that the step $(O,E)\overset{\psi}{\to}(O',E')$ was of type \cF. In this case, $(O',E')=(o_1o_2\dots o_{m-2},o_mo_{m-1}E)$, and the argument is similar to the previous case. 
By Lemma~\ref{lem:O'E'}\ref{part:even}, the word $o_mo_{m-1}$ is contained inf $e'_1$, so we can write $e'_1=o_mo_{m-1}v$ for some (possibly empty) word $v$, where $v\le E<o_{m-1}<o_{m-2}=o'_h$ by Lemma~\ref{lem:O'E'}\ref{part:o-1} applied to~$E$. 

Let us first show that $o_m\dashline o_{m-1}$ is a standard factorization. Since the step $(O,E)\overset{\psi}{\to}(O',E')$ was of type \cF, $o_m$ is not splittable. This means that either $|o_m|=1$, in which case $o_{m-1}$ is clearly the longest proper suffix of $o_mo_{m-1}$ that belongs to $\L$, or its standard factorization $o_m=r\dashline s$ satisfies $o_{m-1}\le s$, in which case Lemma~\ref{lem:concatenate} implies that $o_m\dashline o_{m-1}$ is a standard factorization.

The computation of the ISF of $e'_1$ with respect to $o'_h=o_{m-2}$ repeatedly removes the smallest proper suffix $s_i$. 
As long as a non-empty portion $u_i$ of $v$ remains, Lemma~\ref{lem:no-overlap} (using the fact that $u_i\le v<o_{m-1}$) implies that $s_i$ must be contained in $u_i$. As in the previous case, $s_i$ must have even length, and since $s_i\le u_i\le v<o_{m-2}$, the computation of the ISF continues until all of $v$ has been removed. 

Let $i\ge0$ be the largest index such that $s_i$ is contained in $v$, so that $v=s_is_{i-1}\dots s_1$.
Since $o_m\dashline o_{m-1}$ is a standard factorization and $o_{m-1}$ is odd, the computation of the ISF of $e'_1$ stops here, that is, we have $j=i+1$, $r_j=o_m$ and $s_j=o_{m-1}$. At this point, since $s_j=o_{m-1}<o_{m-2}=o'_h$, Definition~\ref{def:psinv} applies a step of type~\cFi\ to $(O',E')$, producing $(o_1o_2\dots o_{m-2}o_{m-1}o_m,E)=(O,E)$.
\end{proof}

The next lemma describes some properties of the pairs $(O',E')$ in the computation of $\Psinv$. We write by $(O',E')\overset{\psinv}{\to}(O,E)$ to indicate that $(O,E)$ is obtained by applying one step $(\psinv)$ to $(O',E')$, and we use the same notation as in Definition~\ref{def:psinv}.

\begin{lemma}\label{lem:OE}
Let $(O',E')\overset{\psinv}{\to}(O,E)$ be a step in the computation of $\Psinv$ on some input $w'\in\We_n$. Denote the Lyndon factorization of $O$ by $o_1|o_2|\dots|o_m$, and use the convention $o_i=o'_i=\infty$ for $i\le0$.
\begin{enumerate}[label = (\roman*)]
\item\label{part:eveninv} All the Lyndon factors of $E$ are even.
\item\label{part:factO} The word $O$ has odd and distinct Lyndon factors, given, after each type of step, by
$$O=o_1|o_2|\dots|o_m=
\begin{cases} o'_1|o'_2|\dots |o'_{h-1}|o'_he'_1 & \quad\cSi,\\
o'_1|o'_2|\dots|o'_{h-1}|r_js_jo'_h & \quad\cPi,\\
o'_1|o'_2|\dots|o'_h|s_j|r_j & \quad\cFi.
\end{cases}$$
\item\label{part:o'-1} $E<o_{m-1}$.
\item\label{part:e1} If $|o_m|\ge2$ and its standard factorization is $o_m=r\dashline s$, then the leftmost Lyndon factor $e_1$ of $E$ satisfies $e_1\le s$.
\item\label{part:Si} After a step of type \cSi, the standard factorization of $o_m$ is $o'_h\dashline e'_1$.
\item\label{part:Pi} After a step of type \cPi, the standard factorization of $o_m$ is $r_js_j\dashline o'_h$.
\end{enumerate}
\end{lemma}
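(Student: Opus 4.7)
The plan is to prove the lemma by induction on the number of steps executed in the computation of $\Psinv$, showing that at each step, if all six properties hold for the incoming pair $(O', E')$, then they hold for the outgoing pair $(O, E)$. In the base case, namely the initial pair (after possibly moving an odd singleton from $E'$ to $O'$), all inequalities are vacuous, $O'$ has at most one odd factor, and the Lyndon factors of $E'$ are even by the definition of $\We_n$. The inductive step splits into the three cases \cSi, \cPi, \cFi, and within each case I would verify the six parts in an order that respects their interdependencies: first parts (i)--(iii), then (v) and (vi), and finally (iv) for the new pair, since (iv) uses the standard factorization of $o_m$ established by (v) or (vi).

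Part (i) is direct in case \cSi, where the Lyndon factorization of $E = e'_2 \dots e'_k$ is what remains after dropping the leftmost factor of $E'$. In cases \cPi\ and \cFi, I would combine Lemma~\ref{lem:iterated} with the observation $s_1 > e'_1 \geq e'_2$ to conclude that $s_{j-1} | \dots | s_1 | e'_2 | \dots | e'_k$ is the Lyndon factorization of $E$, and all factors are even by condition~(b) of Definition~\ref{def:iterated} together with the inductive hypothesis. For parts (ii) and (iii), the main tool is Lemma~\ref{lem:bigLyndon}, applied iteratively, together with parity bookkeeping: $|r_j|$ has the same parity as $|e'_1| - |s_j| - \sum_{i<j}|s_i|$, which works out to odd in case \cFi. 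The inequality $r_j s_j < o'_h$, needed for case \cPi, follows from $r_j s_j$ being a proper prefix of $e'_1 \leq E' < o'_h$.

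The heart of the argument lies in parts (v) and (vi), where the key tool is Lemma~\ref{lem:no-overlap}. For (v), I would apply the lemma with $r = o'_h$ and $u = e'_1$: the hypothesis $u \leq s'$, where $o'_h = r' \dashline s'$, is exactly part (iv) of the inductive hypothesis applied to $(O', E')$, and $o'_h e'_1 \in \L$ follows from $o'_h < e'_1$ via Lemma~\ref{lem:standard}. The conclusion is that the smallest proper suffix of $o'_h e'_1$ is a suffix of $e'_1$, and since $e'_1 \in \L$ is smaller than its own proper suffixes, this yields the standard factorization $o'_h \dashline e'_1$. The analogous argument for (vi) takes $r = r_j s_j$ (with standard factorization $r_j \dashline s_j$ from the definition of the ISF) and $u = o'_h$, using the case \cPi\ condition $o'_h \leq s_j$ together with $r_j s_j \leq e'_1 < o'_h$ to verify the hypotheses.

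The main obstacle will be part (iv) in case \cFi, where $o_m = r_j$ and the standard factorization of $r_j$ alone is not directly provided by the ISF. I would prove the auxiliary claim that if $r_j s_j \in \L$ has standard factorization $r_j \dashline s_j$, and $|r_j| \geq 2$ with standard factorization $r_j = r' \dashline s'$, then $s_j \leq s'$. The argument compares $s_j$ with the proper suffix $s' s_j$ of $r_j s_j$: since $s_j$ is the smallest proper suffix, $s_j \leq s' s_j$, and assuming for contradiction that $s' < s_j$ leads to a contradiction both when $s'$ and $s_j$ first disagree at some position (giving $s' s_j < s_j$ at that position) and when $s'$ is a proper prefix of $s_j$ (using that $s_j$ is strictly smaller than its own proper suffix from position $|s'|+1$ onward to again produce $s' s_j < s_j$). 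Combined with $s_{j-1} \leq s_j$ from Lemma~\ref{lem:iterated}, this yields $e_1 = s_{j-1} \leq s'$, with a separate check for the boundary case $j = 1$ in which Lemma~\ref{lem:bigLyndon} gives $e'_2 \leq e'_1 = r_1 s_1 < s'$.
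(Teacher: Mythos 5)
Your proposal is correct and follows essentially the same route as the paper's proof: induction over the steps of $\Psinv$, a case split by step type, the same toolkit (Lemmas~\ref{lem:standard}, \ref{lem:bigLyndon}, \ref{lem:concatenate}, \ref{lem:iterated}, \ref{lem:no-overlap}), and the same mechanism whereby part~\ref{part:e1} at one step feeds the standard-factorization claims \ref{part:Si}/\ref{part:Pi} at the next. Two small repairs: the auxiliary claim you prove for case~\cFi\ is exactly the ``only if'' direction of Lemma~\ref{lem:concatenate} applied to the standard factorization $r_j\dashline s_j$, which the paper just cites; and the inequality $e'_1<o'_h$ in cases~\cPi/\cFi\ should be sourced from the branch condition of Definition~\ref{def:psinv} (i.e., $e'_1\le o'_h$ together with $e'_1\ne o'_h$, since one is even and the other odd) rather than from the chain ``$e'_1\le E'<o'_h$'', because $E'<o'_h$ is a consequence of $e'_1<o'_h$ (via Lemma~\ref{lem:bigLyndon}) and not an available hypothesis --- the induction only gives $E'<o'_{h-1}$.
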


\begin{proof}
Part~\ref{part:eveninv} is proved by induction on the number of steps taken in the computation of $\Psinv$ up to the current step. Initially, the word $w'$, after its factor of length one has been removed if $n$ is odd, has only even Lyndon factors. Suppose that all the Lyndon factors of $E'=e'_1|e'_2|\dots|e'_k$ are even, and let us show that the same holds for $E$. This is clear after a step of type~\cSi, since $E'=e'_2|\dots|e'_k$.
After a step of type~\cPi\ or~\cFi, recall that the ISF $e'_1=r_j\dashline s_j\dashline s_{j-1}\dashline\dots\dashline  s_1$ with respect to $o'_h$ satisfies
\begin{equation}\label{eq:es} e'_1<s_1\le\dots\le s_{j-1}\le s_j \end{equation}
by Lemma~\ref{lem:iterated}, where $s_i$ is an even Lyndon word for $i\in[j-1]$, and that $E=s_{j-1}s_{j-2}\dots s_1e'_2\dots e'_k$. It follows that the Lyndon factors of $E$ are $s_{j-1}\ge s_{j-2}\ge \dots \ge s_1> e'_2\ge \dots\ge e'_k$, all of which are even.

We prove parts~\ref{part:factO} and~\ref{part:o'-1} together, also by induction on the number of steps taken. At the start of the algorithm, the initial word $O'$ is either empty or has length $1$, so it trivially has odd and distinct factors, and it satisfies $E'<o'_{h-1}=\infty$. Suppose that, in the current step, the Lyndon factors of $O'$ are odd and distinct (i.e., $o'_1>o'_2>\dots>o'_h$), and that $E'<o'_{h-1}$. We want to show that $O$ has Lyndon factors as given by part~\ref{part:factO}, and that $E<o_{m-1}$.

After a step of type~\cSi, since $o'_h<e'_1$, we have $o'_he'_1\in\L$ by Lemma~\ref{lem:standard}, and $o'_he'_1<o'_{h-1}$ by Lemma~\ref{lem:bigLyndon}, using that $e'_1\le E'<o'_{h-1}$. We conclude that $O$ has distinct factors 
$o'_1>o'_2>\dots>o'_{h-1}>o'_he'_1$, and since $e'_1$ is even by part~\ref{part:eveninv} applied to $E'$, the new factor $o'_he'_1$ is odd. Additionally, we have $E=e'_2\dots e'_k<e'_1e'_2\dots e'_k=E'<o'_{h-1}=o_{m-1}$.

After a step of type~\cPi, since $r_js_j\le e'_1<o'_h$, we have
$r_js_jo'_h\in\L$ by Lemma~\ref{lem:standard}, and $r_js_jo'_h<o'_h<o'_{h-1}$ by Lemma~\ref{lem:bigLyndon}.
This shows that $O$ has distinct Lyndon factors $o'_1>o'_2>\dots>o'_{h-1}>r_js_jo'_h$, and since $r_js_j$ is even, the new factor $r_js_jo'_h$ is odd.
Additionally, by Defintion~\ref{def:iterated}, $s_{j-1}<o'_h<o'_{h-1}=o_{m-1}$. Using equation~\eqref{eq:es} and Lemma~\ref{lem:bigLyndon}, together with the fact that $e'_1\ge e'_2\ge\dots\ge e'_k$, we deduce that $E<o_{m-1}$.

After a step of type~\cFi, we have $r_j<s_j<o'_h$, and so the ISF stopped because $s_j$ (and hence $r_j$) is odd. It follows
that $O$ has odd and distinct Lyndon factors $o'_1>o'_2>\dots>o'_h>s_j>r_j$.
Additionally, since $s_{j-1}<s_j=o_{m-1}$ (using that $s_{j-1}$ and $s_j$ have different length), again equation~\eqref{eq:es} and Lemma~\ref{lem:bigLyndon} imply that $E<o_{m-1}$. \smallskip

Let us now prove part~\ref{part:Pi}. After a step of type~\cPi, we have $o_m=r_js_jo'_h$ by part~\ref{part:factO}, and $o'_h\le s_j$ by definition. Since $r_j\dashline s_j$ is a standard factorization, Lemma~\ref{lem:no-overlap} implies that the smallest proper suffix of $r_js_jo'_h$ is contained in $o'_h$. But since $o'_h\in\L$, the smallest proper suffix is $o'_h$ itself.
\smallskip

Finally, we prove parts~\ref{part:e1} and~\ref{part:Si} by induction on the number of steps taken. Suppose that, if $|o'_h|\ge2$, its standard factorization $o'_h=r'\dashline s'$ satisfies $e'_1\le s'$. Note that, if $O'$ is empty, then Definition~\ref{def:psinv} applies a step of type~\cFi, so we can assume that $O'$ is not empty in the next two paragraphs.

After a step of type~\cSi, we have $o_m=o'_he'_1$ by part~\ref{part:factO}. 
Let us first show that $o'_h\dashline e'_1$ is a standard factorization. If $|o'_h|=1$, this is clear because $e'_1$ is a Lyndon word; if $|o'_h|\ge2$, this follows from Lemma~\ref{lem:concatenate} using the induction hypothesis $e'_1\le s'$. 
We conclude that $e_1=e'_2\le e'_1=s$ in this case.

After a step of type~\cPi, we know by part~\ref{part:Pi} that $o_m=r_js_j\dashline o'_h$ is a standard factorization. 
If $j=1$, we conclude that $e_1=e'_2\le e'_1<o'_h=s$. If $j\ge2$, then the leftmost Lyndon factor of $E$ is $e_1=s_{j-1}$, by equation~\eqref{eq:es}, and so $e_1=s_{j-1}<o'_h=s$.

After a step of type~\cFi, we have $o_m=r_j$ by part~\ref{part:factO}. If $|o_m|=1$ there is nothing to prove, so suppose $|o_m|\ge2$. Since $r_j\dashline s_j$ is a standard factorization, the standard factorization $r_j=r\dashline s$ satisfies $s_j\le s$ by Lemma~\ref{lem:concatenate}. If $j=1$, we have $e_1=e'_2\le e'_1<s_j\le s$. If $j\ge2$, we have $e_1=s_{j-1}<s_j\le s$.
\end{proof}

The next lemma shows that each step of type \cSi, \cPi\ and \cFi\ in the computation of $\Psinv$ is reversed by a step of type \cS, \cP\ and \cF\ in the computation of $\Psi$, respectively.

\begin{lemma}\label{lem:psi_psinv}
Let $(O',E')\overset{\psinv}{\to}(O,E)$ be a step in the computation of $\Psinv$ on some input $w'\in\We_n$. 
Then $(O',E')$ is obtained from $(O,E)$ by applying a step $(\psi)$ (as in Definition~\ref{def:psi}).
\end{lemma}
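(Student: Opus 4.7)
The plan is to proceed by case analysis on the type of $(\psinv)$ step that produced $(O,E)$ from $(O',E')$, showing that in each case the corresponding reverse step of $(\psi)$ is triggered on $(O,E)$ and yields $(O',E')$. The key tool is Lemma~\ref{lem:OE}, which pins down the Lyndon factorization of $O$ together with the standard factorization of its rightmost factor $o_m$; once this information is in hand, verifying which branch of Definition~\ref{def:psi} applies is essentially a mechanical check, and computing the resulting update is direct.

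First I would treat case $\cSi$. By Lemma~\ref{lem:OE}\ref{part:factO}\ref{part:Si}, the Lyndon factorization of $O$ is $o'_1|\dots|o'_{h-1}|o'_h e'_1$, with standard factorization $o_m = o'_h\dashline e'_1$, so $r=o'_h$ (odd) and $s=e'_1$ (even). To check that $o_m$ is splittable I need $s < o_{m-1}=o'_{h-1}$, which follows from $e'_1 < E' < o'_{h-1}$, using that $e'_1$ is a proper prefix of $E'$ (hence strictly smaller in lex order) and invoking the invariant $E' < o'_{h-1}$ that holds inductively before the step (the analogue of Lemma~\ref{lem:OE}\ref{part:o'-1} for the ``input'' state). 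Since $r$ is odd, step $\cS$ fires and produces $(o_1\dots o_{m-1}r,\, sE)=(o'_1\dots o'_h,\, e'_1e'_2\dots e'_k)=(O',E')$.

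Next, for case $\cPi$, Lemma~\ref{lem:OE}\ref{part:factO}\ref{part:Pi} gives $o_m = r_js_jo'_h$ with standard factorization $r_js_j\dashline o'_h$, so $r=r_js_j$ and $s=o'_h$. Splittability holds because $s=o'_h < o'_{h-1} = o_{m-1}$ (the odd, distinct factors of $O'$ are strictly decreasing). Since $o_m$ has odd length and $s=o'_h$ is odd, $r$ is even, so step $\cP$ fires and yields $(o_1\dots o_{m-1}s,\, rE) = (o'_1\dots o'_h,\, r_js_j s_{j-1}\dots s_1 e'_2\dots e'_k)$; recognizing $r_js_js_{j-1}\dots s_1$ as the ISF of $e'_1$, this equals $(O',E')$.

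Finally I would handle case $\cFi$, which I expect to be the main obstacle because here I must prove that $o_m$ is \emph{not} splittable, while in the previous two cases splittability came almost for free. From Lemma~\ref{lem:OE}\ref{part:factO} we have $o_m=r_j$ and $o_{m-1}=s_j$. If $|r_j|=1$ non-splittability is trivial; otherwise let $r_j=r\dashline s$ be its standard factorization, and use the fact that $r_j s_j$ was itself a standard factorization of a Lyndon word in the ISF (since $s_j$ is the smallest proper suffix of $r_js_j$). Applying Lemma~\ref{lem:concatenate} with $\ell=r_j$ and $t=s_j$ (valid because $r_j<s_j$ by Lemma~\ref{lem:iterated}) gives $s_j\le s$, i.e. $o_{m-1}\le s$, so $o_m$ fails the splittability condition $s<o_{m-1}$. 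Hence step $\cF$ fires on $(O,E)$ and produces $(o_1\dots o_{m-2},\, o_m o_{m-1} E) = (o'_1\dots o'_h,\, r_j s_j s_{j-1}\dots s_1 e'_2\dots e'_k) = (O',E')$, using once more that $r_js_js_{j-1}\dots s_1 = e'_1$. Putting the three cases together completes the proof.
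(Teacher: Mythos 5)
Your proposal is correct and follows essentially the same route as the paper: a case analysis on the step type, using Lemma~\ref{lem:OE} to identify the standard factorization of $o_m$, checking (non-)splittability, and verifying that the corresponding branch of Definition~\ref{def:psi} recovers $(O',E')$; in particular your treatment of case \cFi\ via Lemma~\ref{lem:concatenate} is exactly the argument the paper invokes from the proof of Lemma~\ref{lem:OE}\ref{part:e1}. The only cosmetic slip is writing $e'_1<E'$ in case \cSi, which should be $e'_1\le E'$ (they coincide when $E'$ has a single Lyndon factor), but the needed conclusion $e'_1<o'_{h-1}$ is unaffected.
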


\begin{proof}
If the step $(O',E')\overset{\psinv}{\to}(O,E)$ was of type~\cSi, we know by Lemma~\ref{lem:OE}\ref{part:Si} that the standard factorization of the rightmost Lyndon factor of $O$ is $o_m=o'_h\dashline e'_1$. By Lemma~\ref{lem:OE}\ref{part:o'-1} applied to $E'$, we have $e'_1\le E'<o'_{h-1}=o_{m-1}$. Thus, $o_m$ is splittable, and since $o'_h$ is odd, Definition~\ref{def:psi} applies a step of type~\cS\ to $(O,E)=(O'e'_1,e'_2\dots e'_k)$, recovering $(O',E')$.

If the step $(O',E')\overset{\psinv}{\to}(O,E)$ was of type~\cPi, we know by Lemma~\ref{lem:OE}\ref{part:Pi} that the standard factorization of the rightmost Lyndon factor of $O$ is $o_m=r_js_j\dashline o'_h$. Since $o'_h<o'_{h-1}=o_{m-1}$, again $o_m$ is splittable, and since $r_js_j$ is even, Definition~\ref{def:psi} applies a step of type~\cP\ to $(O,E)=(o'_1o'_2\dots o'_{h-1}r_js_jo'_h,s_{j-1}\dots s_2s_1e'_2\dots e'_k)$, moving $r_js_j$ to the beginning of $E$ and recovering $(O',E')$.

If the step $(O',E')\overset{\psinv}{\to}(O,E)$ was of type~\cFi, then $o_m=r_j$ and $o_{m-1}=s_j$  by Lemma~\ref{lem:OE}\ref{part:factO}. As in the proof of Lemma~\ref{lem:OE}\ref{part:e1}, either $|o_m|=1$, or its standard factorization $o_m=r\dashline s$ satisfies $o_{m-1}=s_j\le s$. This means that $o_m$ is not splittable, so Definition~\ref{def:psi} applies a step of type~\cF\ to $(O,E)=(O's_jr_j,s_{j-1}\dots s_2s_1e'_2\dots e'_k)$, placing 
$r_j$ and $s_j$ (in reverse order) at the beginning of $E$ and recovering $(O',E')$.
\end{proof}

\begin{proof}[Proof of Theorem~\ref{thm:bijection}]
Let us show that $\Psi$ is a bijection from $\Wo_n$ to $\We_n$ with inverse $\Psinv$.
Given $w\in\Wo_n$, each iteration of the step $(\psi)$ in the computation of $\Psi(w)$ from Definition~\ref{def:psi} decreases the length of the word $O$ by a positive even amount. 
Suppose first that $n$ is even, so that this process ends when $O$ is empty. By Lemma~\ref{lem:O'E'}\ref{part:even}, all the Lyndon factors of $\Psi(w)$ are even, and so $\Psi(w)\in\We_n$. 
Additionally, by Lemma~\ref{lem:psinv_psi}, each step $(\psi)$ in this computation is reversed by a step $(\psinv)$ in the computation of $\Psinv$ on input $\Psi(w)$, as in Definition~\ref{def:psinv}.
It follows that $\Psinv(\Psi(w))=w$.
Similarly, for any given $w'\in\We_n$, all the Lyndon factors of $\Psinv(w')$ are odd and distinct by Lemma~\ref{lem:OE}\ref{part:factO}, so $\Psinv(w')\in\Wo_n$. Additionally, by Lemma~\ref{lem:psi_psinv}, each step $(\psinv)$ in the computation of $\Psinv(w')$ is reversed by a step $(\psi)$ in the computation of $\Psi$ on input $\Psinv(w')$, so $\Psi(\Psinv(w'))=w'$.

In the case when $n$ is odd, the argument is similar, except that now both maps $\Psi$ and $\Psinv$ apply an additional step. Given $w\in\Wo_n$, the computation of $\Psi(w)$ eventually reaches the case $|O|=1$, at which point the remaining letter is inserted in $E$ as a new Lyndon factor of length one, so $\Psi(w)\in\We_n$ in this case as well. 
On the other hand, in the computation of $\Psinv(w')$ for $w'\in\We_n$, the initialization step moves the (unique) Lyndon factor of $E'$ of length one to $O'$, to ensure that only even factors remain in $E'$. These additional steps in Definitions~\ref{def:psi} and~\ref{def:psinv} are clearly inverses of each other.
The remaining steps proceed as in the case of even~$n$, so we conclude that $\Psinv(\Psi(w))=w$ for all $w\in\Wo_n$, and $\Psi(\Psinv(w'))=w'$ for all  $w'\in\We_n$.

Finally, the bijections $\Psi$ and $\Psinv$ are clearly weight-preserving, since all their steps move subwords between $O$ and $E$ without changing the total weight $\wt(O)\wt(E)$.
\end{proof}

\begin{proof}[Proof of Theorem~\ref{thm:main}]
Combining Theorem~\ref{thm:bijection} with Propositions~\ref{prop:Phi} and~\ref{prop:Xi}, and identifying $\Mo_n$ with $\Wo_n$ and $\Me_n$ with $\We_n$ as described in Section~\ref{sec:GF}, it follows that the map $f_S=\Phi_S^{-1}\circ\Psi\circ\Xi_S$ is a bijection from $\{\pi\in\So_n:\Asc(\pi)\subseteq S\}$ to $\{\pi\in\Se_n: \Des(\pi)\subseteq S\}$.
\end{proof}

Below are two examples of the computation of $f_S$ as a composition of the three bijections.

\begin{example}
Let $n=17$ and $S=\{2,5,8,15\}$, and let 
\begin{align*}\pi&=3\ 2\ 15\ 13\ 11\ 16\ 14\ 7\ 17\ 9\ 8\ 6\ 5\ 4\ 1\ 12\ 10\\
&=(9,17,10)(6,16,12)(4,13,5,11,8,7,14)(2)(1,3,15)\in\So_{17},\end{align*}
which has $\Asc(\pi)=S$, so it belongs to the left-hand side of equation~\eqref{eq:main}. To compute $\Xi_S(\pi)$, we make the replacements $\{1,2\}\mapsto \a$, $\{3,4,5\}\mapsto\b$, $\{6,7,8\}\mapsto\c$, $\{9,10,11,12,13,14,15\}\mapsto\d$ and $\{16,17\}\mapsto\e$ in the cycle form of $\pi$. The resulting multiset of necklaces
$$\Xi_S(\pi)=(\d,\e,\d)(\c,\e,\d)(\b,\d,\b,\d,\c,\c,\d)(\a)(\a,\b,\d)$$
can be identified with the word
$w=\d\d\e|\c\e\d|\b\d\b\d\c\c\d|\a\b\d|\a$.
As in Example~\ref{ex:17}, we have $$\Psi(w)=\d\e|\d|\c\c\e\d\c\d|\b\d|\b\d|\a\a\b\d,$$
which corresponds to the multiset of necklaces
$$(\d,\e)(\d)(\c,\c,\e,\d,\c,\d)(\b,\d)(\b,\d)(\a,\a,\b,\d).$$
Next, to apply $\Phi_S^{-1}$ to this multiset of necklaces, we replace the elements by their periodic labels
\begin{multline*}
(\d\e\d\dots,\e\d\e\dots)(\d\d\d\dots)(\c\c\e\dots,\c\e\d\dots,\e\d\c\dots,\d\c\d\dots,\c\d\c\dots,\d\c\c\dots)\\
(\b\d\b\dots,\d\b\d\dots)(\b\d\b\dots,\d\b\d\dots)(\a\a\b\dots,\a\b\d\dots,\b\d\a\dots,\d\a\a\dots).\end{multline*}
and order them lexicographically, breaking ties consistently, to get the permutation
\begin{align*} f_S(\pi)=\Phi_S^{-1}(\Psi(\Xi_S(\pi)))&=(15,17)(14)(6,8,16,13,7,12)(5,11)(4,10)(1,2,3,9)\\
&=2\ 3\ 9\ 10\ 11\ 8\ 12\ 16\ 1\ 4\ 5\ 6\ 7\ 14\ 17\ 1\ 15 \in\Se_{17},\end{align*}
which has $\Des(f_S(\pi))=\{5,8,15\}\subseteq S$.
\end{example}

\begin{example}
Now let $n=8$ and $S=\{4,7\}$, and let $\pi=75218634=(6)(1,7,3,2,5,8,4)\in\So_{8}$,
which has $\Asc(\pi)=S$. Making the replacements $\{1,2,3,4\}\mapsto \a$, $\{5,6,7\}\mapsto\b$ and $\{8\}\mapsto\c$, in the cycle form of $\pi$, we get the multiset of necklaces $\Xi_S(\pi)=(\b)(\a,\b,\a,\a,\b,\c,\a)$, which
can be identified with the word
$w=\b|\a\a\b\a\a\b\c$.
Next we compute $\Psi(w)=\a\b|\a\b|\a\a\b\c$, as shown below.
$$\begin{array}{lcr}
O && E\\ \hline
\b|\a\a\b\dashline\a\a\b\c && - \\
\b|\a\dashline\a\b &^\cS& \a\a\b\c \\
\b|\a &^\cS& \a\b\a\a\b\c \\
- &^\cF& \a\b\a\b\a\a\b\c
\end{array}$$
The word $\Psi(w)$ corresponds  to the multiset of necklaces $(\a,\b)(\a,\b)(\a,\a,\b,\c)$.
Finally, we apply $\Phi_S^{-1}$ to this multiset, as in Example~\ref{ex:Phi}, to get the permutation
$$f_S(\pi)=\Phi_S^{-1}(\Psi(\Xi_S(\pi)))=(3,6)(2,5)(1,4,7,8)=45672381\in\Se_8,$$
which has $\Des(f_S(\pi))=\{4,7\}= S$.
\end{example}

\section{Final remarks}

The maps $\Phi$ and $\Xi$ were originally defined in~\cite{GR,GRR} as bijections from words to multisets of necklaces. Specifically, instead of starting from the permutation $\pi$ as in Section~\ref{sec:necklaces}, the original maps apply to the word $w$ obtained from the one-line notation of $\pi^{-1}$ by replacing entries $1,\dots,s_1$ with $a_1$, entries $s_1+1,\dots,s_2$ with $a_2$, and so on. 
Once the set $S$ is fixed, these two descriptions are equivalent.
For the map $\Phi$, one recovers $\pi^{-1}$ by taking the standard permutation of $w$, obtained by numbering the occurrences of $a_1$ in $w$ from left to right, then the occurrences of $a_2$ from left to right, and so on. For the map $\Xi$, one recovers $\pi^{-1}$ by taking the costandard permutation of $w$, obtained similarly by numbering the occurrences of each letter from right to left instead. 

In Section~\ref{sec:GF}, we interpreted multisets of necklaces in $\M_n$ as words in $\W_n$, by identifying each primitive necklace with its Lyndon word, and viewing the multiset as a Lyndon factorization. This identification allows us to interpret the original bijection of Gessel and Reutenauer \cite[Lem.~3.4]{GR} as a weight-preserving bijection on words, $\Phi_\W:\W_n\to\W_n$, which takes the cycle structure of the standard permutation of $w$ to the partition given by the lengths of the Lyndon factors of its image $\hat{w}$. 
In the special case of words of weight $x_1x_2\dots x_n$, which can be viewed as permutations in $\S_n$ since each letter appears once, the restricted bijection $\S_n\to\S_n$ is equivalent to Foata's first fundamental transformation \cite[Sec.~10.2]{Lothaire}, up to trivial symmetries. More specifically, the image of $\pi\in\S_n$ is obtained by writing $\pi$ in cycle form, requiring now that each cycle starts with its smallest element and cycles are ordered by decreasing first element, and then removing the parentheses to get the one-line notation of~$\hat\pi$. In the other direction, the left-to-right minima of $\hat\pi$ determine the initial elements of the cycles of~$\pi$. In the general bijection $\Phi_\W:\W_n\to\W_n$, the role of the left-to-right minima of $\pi\in\S_n$ is played by the left-to-right minima of the sequence of suffixes of $w\in\W_n$, which, as described in Lemma~\ref{lem:LRmin}, determine its Lyndon factorization.

In the special case $S=[n-1]$, the map $f_S$ from Theorem~\ref{thm:main} is a bijection between $\So_n$ and $\Se_n$. 
This bijection is the restriction of $\Psi:\Wo_n\to\We_n$ to the case of words of weight $x_1x_2\dots x_n$, viewed as permutations, conjugated by the above transformation $\pi\mapsto\hat\pi$.
We remark that this map $f_{[n-1]}:\So_n\to\Se_n$ is different from B\'ona's bijection described in Section~\ref{sec:intro} (even after simple variations such as writing each cycle starting with its smallest element), and it does not seem to behave well with respect to ascents and descents in general.
For example, to compute the image of $\pi=(6)(1,7,3,8,4,2,5)\in\So_{8}$, we consider the corresponding word $w=6|1738425\in\Wo_8$, and apply $\Psi$ as shown in the table below.
Then, the word $\Psi(w)=46|38|1725\in\We_8$ corresponds to the permutation $f_{[7]}(\pi)=(4,6)(3,8)(1,7,2,5)\in\Se_8$.
$$\begin{array}{lcr}
O && E\\ \hline
6|17384\dashline25 && - \\
6|17\dashline384&^\cS& 25 \\
6|38\dashline4&^\cP& 1725 \\
6|4&^\cP& 381725 \\
- &^\cF& 46381725
\end{array}$$

\subsection*{Acknowledgments} 
The author thanks Ron Adin and Yuval Roichman for sharing the problem that led to this paper, and Stoyan Dimitrov for providing useful comments and relevant references.


\begin{thebibliography}{} 

\bibitem{code} Code available at \url{https://math.dartmouth.edu/~sergi/oe}.

\bibitem{AHR}
R.~M. Adin, P. Heged\H{u}s and Y. Roichman,
Descent set distribution for permutations with cycles of only odd or only even lengths, preprint, \texttt{arXiv:2502.03507}.

\bibitem{Bona}
M. B\'ona, {\it A walk through combinatorics}, third edition, 
World Sci. Publ., Hackensack, NJ, 2011.

\bibitem{GRR} 
I.~M. Gessel, A. Restivo and C. Reutenauer, A bijection between words and multisets of necklaces, European J. Combin. {\bf 33} (2012), no.~7, 1537--1546.

\bibitem{GR}
I.~M. Gessel and C. Reutenauer, Counting permutations with given cycle structure and descent set, J. Combin. Theory Ser. A {\bf 64} (1993), no.~2, 189--215.

\bibitem{HR}
C. Hohlweg and C. Reutenauer, Lyndon words, permutations and trees, Theoret. Comput. Sci. {\bf 307} (2003), no.~1, 173--178.

\bibitem{Lothaire}
M. Lothaire, {\it Combinatorics on words}, corrected reprint of the 1983 original, 
Cambridge Mathematical Library, Cambridge Univ. Press, Cambridge, 1997.

\bibitem{Teimoori}
H. Teimoori~Faal, A multiset version of even-odd permutations identity, Internat. J. Found. Comput. Sci. {\bf 30} (2019), no.~5, 683--691.

\bibitem{Steinhardt}
J. Steinhardt, Permutations with ascending and descending blocks, Electron. J. Combin. {\bf 17} (2010), no.~1, Research Paper 14, 28 pp.

\end{thebibliography}
\end{document}